\documentclass[a4paper,reqno]{amsart}

\usepackage[usenames,dvipsnames,svgnames,table]{xcolor}

\setlength{\textheight}{23.1cm}
\setlength{\textwidth}{16cm}
\setlength{\topmargin}{-0.8cm}
\setlength{\parskip}{0.3\baselineskip}
\hoffset=-1.4cm

\usepackage{hyperref}
\usepackage{amsrefs}

\setcounter{tocdepth}{1}

\usepackage[framemethod=tikz]{mdframed}

\usepackage{tikz}
\usepackage{tikz-cd}

\usepackage[active]{srcltx}
\usepackage{amssymb}
\usepackage{mathrsfs}

\newtheorem{teo}{Theorem}[section]
\newtheorem{defin}[teo]{Definition}
\newtheorem{prop}[teo]{Proposition}

\theoremstyle{definition}
\newtheorem{remark}[teo]{Remark}

\newtheorem{say}[teo]{}

\numberwithin{equation}{section}

\newcommand{\C}{\mathbb{C}}
\newcommand{\R}{\mathbb{R}}
\newcommand{\Zeta}{{\mathbb{Z}}}
\newcommand{\meno}{^{-1}}
\newcommand{\alfa}{\alpha}
\newcommand{\alf}{\alpha}
\newcommand{\la}{\lambda}
\newcommand{\lam}{\lambda}
\newcommand{\restr}[1] {\vert_{#1}}
\newcommand{\cinf}{C^\infty}
\newcommand{\om}{\omega}
\newcommand{\lds}{\ldots}
\newcommand{\cds}{\cdots}
\newcommand{\cd}{\cdot}
\newcommand{\sx}{\langle}
\newcommand{\xs}{\rangle}
\newcommand{\scalo}{\sx \ , \ \xs}
\newcommand{\lra}{\longrightarrow}
\newcommand{\ga}{\gamma}
\newcommand{\Ga}{\Gamma}
\newcommand{\harm}{\mathscr{H}}
\newcommand{\est} {\Lambda}
\newcommand{\forme} {\mathcal A}

\newcommand{\sieg}{\mathfrak{H}}
\newcommand{\univab}{\mathcal{A}}
\newcommand{\univabt}{{\mathfrak{A}}}
\newcommand{\varpit}{\varpi}
\newcommand{\Abel}{\alfa}
\newcommand{\Abelz}{\Abel_{x_0}}
\newcommand{\siegg}{\mathfrak{H}_g}

\newcommand{\LB}{\Theta}
\newcommand{\LBD}{\LB^\otwo}
\newcommand{\M}{\mathsf{M}}
\newcommand{\Mg}{{\mathsf{M}_g}}
\newcommand{\A}{\mathsf{A}}

\newcommand{\Z}{{\mathbb{Z}}}
\newcommand{\PGL}{\operatorname {PGL}}
\newcommand{\Sp}{\operatorname {Sp}}
\newcommand{\barz}{\overline{z}}
\newcommand{\debar }{\overline{\partial } }
\newcommand{\de }{ {\partial } }

\def\delbar{\overline{\partial }}

\newcommand{\erre}{\operatorname{R}}

\newcommand{\Cg}{{\mathcal{C}_g}}
\newcommand{\Conn}{\operatorname{Conn}}
\newcommand{\ta}{\tau}
\newcommand{\vbE}{E}
\newcommand{\otwo}{{\otimes 2}}

\newcommand{\vol}{\operatorname{vol}}
\newcommand{\prost}{{\mathcal P_g}}
\newcommand{\Curv}{\erre^{1,1} } 
\newcommand{\TT}{\operatorname{T}}
\newcommand{\mut}{{\frac{\sqrt{-1}}{12\pi}}}
\newcommand{\muto}{{\frac{\sqrt{-1}}{2\pi}}}
\newcommand{\class}{{\Lambda}}
\newcommand{\WP}{{\operatorname{WP}}}

\usepackage{version}

\includeversion{longversion}

\begin{document}
	
\title[Theta bundle and the Hodge theoretic projective structure]{Theta bundle,
Quillen connection and the Hodge theoretic projective structure}
	
\author[I. Biswas]{Indranil Biswas}

\address{Department of Mathematics, Shiv Nadar University, NH91, Tehsil
Dadri, Greater Noida, Uttar Pradesh 201314, India}

\email{indranil.biswas@snu.edu.in, indranil29@gmail.com}

\author[A. Ghigi]{Alessandro Ghigi}
	
\address{Dipartimento di Matematica, Universit\`a di Pavia, via
Ferrata 5, I-27100 Pavia, Italy}
	
\email{alessandro.ghigi@unipv.it}
	
\author[C. Tamborini]{Carolina Tamborini}

\address{Essener Seminar f\"ur Algebraische Geometrie und Arithmetik, Fakult\"at f\"ur Mathematik, 
Universit\"at Duisburg–Essen, 45117 Essen, Germany}

\email{carolina.tamborini@uni-due.de}

\subjclass[2010]{14H10, 53B10, 14K10, 14K25}

\keywords{Projective structure, bidifferential, theta bundle, Siegel form, Quillen metric}
	
\begin{abstract}
There are two canonical projective structures on any compact Riemann surface of genus at least two: one 
coming from the uniformization theorem, and the other from Hodge theory. They produce two (different) families 
of projective structures over the moduli space $\M_g$ of compact Riemann surfaces. A recent work of Biswas, 
Favale, Pirola, and Torelli shows that families of projective structures over $\M_g$ admit an 
equivalent characterization in terms of complex connections on the dual $\mathcal{L}$ of
the determinant of the Hodge line bundle 
over $\M_g$; the same work gave the connection on $\mathcal L$ corresponding to the projective
structures coming from uniformization.\\ Here we construct the 
connection on $\mathcal L$ corresponding to the family of Hodge theoretic projective structures. This
connection is described in three different
ways: Firstly as the connection induced on $\mathcal L$ by the Chern connection of the $L^2$-metric 
on the Hodge bundle, secondly as an appropriate root of the Quillen metric induced by the (square of the) 
Theta line bundle on the universal family of abelian varieties, endowed with the natural Hermitian metric 
given by the polarization, and finally as Quillen connection gotten using the Arakelov metric on the 
universal curve, modified by Faltings' delta invariant.
\end{abstract}
\maketitle

\tableofcontents
	
\section{Introduction}

A projective structure on a Riemann surface $C$ is an equivalence class
of projective atlases, i.e., an equivalence class of coverings by
holomorphic coordinate charts such that all the transition functions
are M\"obius transformations. Equivalently, a projective structure on $C$
is a holomorphic principal $\text{PGL}(2, {\mathbb C})$--bundle $E_{\text{PGL}}$ on $C$
equipped with a holomorphic connection $D$ and also a holomorphic reduction of structure group
$E_B\, \subset\, E_{\text{PGL}}$ to the subgroup of lower triangular matrices (the Borel
subgroup of $\text{PGL}(2, {\mathbb C})$) such that the second fundamental form of it, for the
connection $D$, is everywhere nonzero. Projective structures on $C$ form an affine space
$\mathcal P(C)$ modelled on $H^0(C,\, 2K_C)$.

We consider compact connected Riemann surfaces.
If $C$ varies in the moduli space $\Mg$ of compact
Riemann surfaces of genus $g$ with $g\, \geq\, 2$ (which is
considered as a complex analytic orbifold), these spaces $\mathcal P(C)$
of projective structures produce a holomorphic fiber bundle
\begin{gather*}
\phi \,\,:\,\, \mathcal P_g \,\,\lra\,\, \Mg,
\end{gather*}
which is in fact a holomorphic torsor for the holomorphic cotangent
bundle $\Omega^1_{\Mg}$; see \cite{BFPT}. There is a natural
$C^\infty$ section $\beta^u$ of $\mathcal P_g$, which associates to
each $[C]\,\in\, \Mg$ the projective structure obtained from the
uniformization theorem. This classical projective structure is
called \emph{canonical} because it depends only on $C$, and not on
any extra choice or data. On the other hand, there is another such
canonical global $C^\infty$ section $\beta^\eta$ of $\mathcal P_g$,
different from $\beta^u$, which is constructed using only basic
Hodge theory \cite{BCFP}; this construction, which is based on
\cite{cpt} and \cite{cfg}, is briefly recalled in Section 4.1.

Let
\begin{gather*}
\mathcal L \,\,\lra\,\, \Mg
\end{gather*}
be the holomorphic line bundle obtained by taking the dual of the determinant of the Hodge
bundle over $\M_g$. In \cite{BFPT} it is proved that the families of projective
structures over $\M_g$ admit an equivalent characterization in terms
of complex connections on $\mathcal L$. The idea is the following. First note that
connections on a vector bundle form an affine space. The space of
connections on $\mathcal L$ whose $(0,\, 1)$-part is compatible with
the holomorphic structure of $\mathcal L$ coincides with the space
of $C^\infty$ sections of a holomorphic fiber bundle
\begin{gather*}
\Conn (\mathcal L) \,\,\lra\, \,\Mg
\end{gather*}
which is a holomorphic $\Omega^1_\Mg$-torsor. If follows from a
recent result of \cite{FPT} on holomorphic 1-forms on $\Mg$ (see
Theorem \ref{teo-fpt} below) that for $g\,\geq\, 5$ there is at most
one holomorphic isomorphism between the above $\Omega^1_\Mg$-torsors
$\Conn(\mathcal L)$ and $\mathcal P_g$. In \cite{BFPT} it is proved
that such a holomorphic isomorphism does exist. In this way one gets
a canonical correspondence between global sections of $\mathcal P_g$
and connections on $\mathcal L$ whose whose $(0,\, 1)$-part is
compatible with the holomorphic structure of $\mathcal L$. Moreover,
for a global $C^\infty$ section $\beta$ of $\mathcal P_g$ one can
define a $(1,\,1)$-form $\debar \beta$. Hence we get a diagram
\begin{equation}\label{intro-diag}
\begin{tikzcd}
\Gamma (\Conn (\mathcal L)) \arrow[rr] \arrow[dr]
& & \Gamma (\mathcal P_g) \arrow[dl] \\
& \forme^{1,1}(\Mg). &
\end{tikzcd}
\end{equation}
The horizontal arrow comes from the isomorphism of torsors, and it
is a bijection. The left arrow assigns to a connection its
curvature, and the right arrow is given by the above mentioned $\debar$
operation multiplied by $\frac{\sqrt{-1}}{2\pi}$; these two arrows are injective. In Section
\ref{sec:correspondence}, in particular in Theorem
\ref{thm:correspondence}, some observations on this correspondence
are made.

Let $\nabla^Q$ be the Quillen connection on $\mathcal L$ constructed
using the Poincar\'e metric on the fibers of the universal curve
over $\Mg$. It was shown in \cite{BFPT} that the horizontal
bijection in \eqref{intro-diag} takes $\nabla^Q$ to the section
$\beta^u$, and the corresponding curvature in the diagram
\eqref{intro-diag}, which coincides with ${\frac{\sqrt{-1}}{2\pi}}\debar \beta^u$,
is actually the Weil-Petersson K\"ahler form on
$\Mg$. This completely describes the position of $\beta^u$ in the triangle
\eqref{intro-diag}, and connects three very natural objects which
are related to the uniformization and the Poincar\'e metric.

Here we study the earlier mentioned section $\beta^{\eta}$ of Hodge
theoretic origin. It was proved in \cite{BCFP} that
$\debar \beta^\eta$ is a multiple of $j^*\om_S$, where
$j\,:\, \Mg\,\lra\, \A_g$ is the period map and $\om_S$ is the
Siegel metric on $\A_g$. We look for the connection on $\mathcal L$
that corresponds to $\beta^\eta \,\in\, \Gamma (\mathcal P_g)$ (in
the diagram \eqref{intro-diag}) and
$j^*\om_S \,\in\, \forme^{1,1}(\Mg)$. Since the projective structure
$\beta^{\eta}([C])$ is constructed using basic Hodge theory on $C$,
it is natural to expect its corresponding connection to be related
to the pull-back, via the period map, of some natural hermitian
line bundle on the moduli space $\A_g$. This is indeed the case. We show
that this connection can be described in at least three different
ways.

In Section \ref{secl2} we describe the above connection associated to
$\beta^{\eta}$ as the one
induced on $\mathcal L$ by the Chern connection of the $L^2$-metric
on the Hodge bundle. In Section \ref{sec:abelian-varieties} we
obtain the connection as the Chern connection of an appropriate root
of the Quillen metric induced by the (square of the) Theta line
bundle on the universal family of abelian varieties, endowed with
the natural Hermitian metric given by the polarization.

Finally, we address the question 
whether it is possible to obtain the same connection on
$\mathcal{L}$ as a Quillen connection constructed using some metric
on the fibers of the universal curve, as it happens with the
Poincar\'e metric in the case of the projective structure coming
from uniformization. In this direction, we observe in Section
\ref{sec:curv-arak-metr} that one can obtain the connection on
$\mathcal L$ corresponding to $\beta^{\eta}$ as a modification of
the Quillen connection gotten on $\mathcal L$ using the Arakelov
metric on the universal curve; the modification is done using
Faltings' delta invariant.

\section{Projective structures and connections}\begin{say}\label{cptriemann}
		
Let $C$ be a compact connected Riemann surface of genus $g$, with
$g\,\geq\, 2$. Denote by $K_C$ the canonical line bundle of $C$. A
holomorphic atlas on $C$ is \emph{projective} if for every pair of
overlapping charts the change of coordinates on each connected
component is the restriction of some M\"obius transformation (i.e.,
element of $\PGL(2,\C)$). Two projective atlases are
\emph{equivalent} if their union is also a projective atlas. A
\emph{projective structure} is an equivalence class of projective atlases.

Another description of projective structures: A projective structure on $C$ is
a holomorphic ${\mathbb C}{\mathbb P}^1$--bundle ${\mathbb P}\, \longrightarrow\, C$ together
with a holomorphic connection $D$ on $\mathbb P$, and a holomorphic section $\sigma\,:\, C\,
\longrightarrow\, {\mathbb P}$, such that the second fundamental form of $\sigma$, for the
connection $D$, is everywhere nonzero. Consider the trivial ${\mathbb C}{\mathbb P}^1$--bundle
${\mathbb C}{\mathbb P}^1\times{\mathbb C}{\mathbb P}^1 \, \longrightarrow\, {\mathbb C}{\mathbb P}^1$
equipped with the trivial connection and the section given by the diagonal ${\mathbb C}{\mathbb P}^1
\, \subset\, {\mathbb C}{\mathbb P}^1\times{\mathbb C}{\mathbb P}^1$.
Given a projective structure on $C$ defined using coordinate charts as above, pullback this flat
${\mathbb C}{\mathbb P}^1$--bundle with section to the domains of the coordinate charts. Using the
transition functions for the charts, these locally defined flat ${\mathbb C}{\mathbb P}^1$--bundles with section
patch together compatibly to define a holomorphic ${\mathbb C}{\mathbb P}^1$--bundle on $C$ equipped
with a holomorphic connection and a holomorphic section. (See \cite{Gu}.)

Let $\mathcal{P}(C)$ denote the space of all projective structures
on $C$. Then $\mathcal{P}(C)$ is an affine space modelled on
$H^0(C,\, 2K_C)$ \cite{Gu}. There are several equivalent
characterizations of projective structures; here we will use the
following one from \cite{BR1}.

Consider the complex surface $C\times C$, and let
$\Delta \,\subset\, {C\times C}$ be the reduced diagonal on it. For
$k\,\geq\, 0$, denote by $\Delta_{k+1}$ the $k$--th order
infinitesimal neighborhood of $\Delta$ in ${C\times C}$.  Consider
the line bundle
\begin{equation}\label{dl}
K_{C\times C}(2\Delta)\,:=\, K_{C\times C}\otimes {\mathcal O}_{C\times C}(2\Delta)
\,\lra\, {C\times C}.
\end{equation}
By \cite[eq. (3.3) p. 758]{BR1} there is a short exact sequence
\begin{gather*}
0\,\longrightarrow\, K_C^{\otwo} \,\longrightarrow\, K_{C\times
C}(2\Delta)\big\vert_{\Delta_3}
\,\stackrel{\Psi'}{\longrightarrow}\, K_{C\times
C}(2\Delta)\big\vert_{\Delta_2}\,\longrightarrow\, 0,
\end{gather*}
where $\Psi'$ is the restriction map. Since
$H^1(C,\,K_C^{\otwo})\,=\, 0$ by the assumption that $g\,\geq\, 2$,
this gives the exact sequence of cohomologies
\begin{gather}\label{h}
0\,\longrightarrow\, H^0(C,\, 2K_C) \,\longrightarrow\,
H^0\left({\Delta_3},\, K_{C\times
C}(2\Delta)\big\vert_{\Delta_3}\right)
\,\overset{\widetilde\Psi}{\longrightarrow}\, H^0\left(\Delta_2,\,
K_{C\times C}(2\Delta)\big\vert_{\Delta_2}\right)
\,\longrightarrow\, 0,
\end{gather}
where $\widetilde\Psi$ is induced by $\Psi'$. By \cite[Theorem
2.1]{BR1}, \cite[Theorem 2.2]{BR2}, there is a canonical
trivializing section of $K_{C\times C}(2\Delta)\big\vert_{\Delta_2}$
\begin{equation}\label{s0}
s_{0, C}\,\,\in\,\, H^0\left(\Delta_2,\, K_{C\times C}(2\Delta)\big\vert_{\Delta_2}\right),
\end{equation}
and by \cite[Theorem 3.2]{BR1}, there is a canonical bijection
\begin{equation}\label{phi}
\mathcal{P}(C)\, \longrightarrow\, \,\,\widetilde{\Psi}^{-1}(s_{0,C}),
\end{equation}
where $\widetilde\Psi$ is the map in \eqref{h}. We will identify
$\mathcal{P}(C)$ and $\widetilde{\Psi}^{-1}(s_{0,C})$ using this
bijection. From \eqref{h} it follows that $\mathcal{P}(C)$ is an
affine space for $H^0(C, \,2K_C)$. See also \cite[Theorem 3.2]{BR1}
and \cite[p.~688, Theorem 2.2]{BR2}.
\end{say}

\begin{say}\label{families}
Following \cite[\S~2]{BCFP} one can globalize the above construction.

Let $\Mg$ denote the
moduli space of compact Riemann surfaces of genus $g$.
First of all consider on $\Mg$ the complex topology and endow it
with the structure of a complex analytic orbifold in the following
way. If $[C]$ is a point of $\Mg$, there is a Kuranishi family
$\pi \,:\,\mathcal C \lra B$, where $B$ is a polydisk, $0 \,\in\, B$ and
$C_0 \,\cong\, C$. The group of holomorphic automorphisms $\operatorname{Aut}(C_0)$ acts on both
$\mathcal C$ and $B$; the map $\pi$ is equivariant for these actions of $\operatorname{Aut}(C_0)$.
One can show that the spaces $(B, \,\operatorname{Aut}(C_0))$ are the uniformizers of a complex analytic
orbifold structure on $\Mg$. (See \cite[XI.4 and XII.4]{acg2} for more details.) As it is well known,
there is no universal family on $\Mg$. But most local geometric constructions
can be performed on the Kuranishi family $\mathcal C \,\lra\, B$ and
this way we get appropriate orbifold objects on $\Mg$. So there is a universal family
\begin{gather}\label{pi}
\pi\,\,:\,\, \Cg \,\, \lra\,\, \Mg
\end{gather}
in the orbifold category. For example, as shown in \cite[\S~2]{BCFP},
the spaces
$ H^0({\Delta_3},\, K_{C_t\times C_t}(2\Delta)\vert_{\Delta_3}) $ for
$t\,\in \,B$ glue together and form a holomorphic
$\operatorname{Aut}(C_0)$-equivariant vector bundle over $B$. Moreover
these local bundles give rise to a holomorphic orbifold vector bundle
$ \mathcal V$ on $\Mg$. Similarly the spaces
$H^0(\Delta_2,\, K_{C_t\times C_t}(2\Delta)\vert_{\Delta_2}) $ give
rise to a holomorphic orbifold vector bundle $\mathcal V_2$ on $\Mg$.
Summing up from \eqref{h} we get an exact sequence of holomorphic
orbifold vector bundles on $\Mg$
\begin{gather}\label{ecv}
0\,\longrightarrow\, \Omega^1_\Mg\,=\,\pi_* K^{\otimes
2}_{\Cg/\Mg} \,\longrightarrow\, \mathcal{V}
\,\overset{\Psi}{\longrightarrow}\, \mathcal{V}_2
\,\longrightarrow\, 0,
\end{gather}
where
$$
\mathcal V_{[C]} \,=\, H^0\left({\Delta_3},\, K_{C\times
C}(2\Delta)\big\vert_{\Delta_3}\right), \ \ \, \mathcal
V_{2,[C]}\, = \,H^0\left({\Delta_2},\, K_{C\times
C}(2\Delta)\big\vert_{\Delta_2}\right),
$$
and $\Psi$ is given by $\widetilde\Psi$ in \eqref{h}. Set
\begin{equation}\label{l1}
\mathcal{P}_g\,:=\, \Psi\meno (\mathbf{s}_0),
\end{equation}
where $\mathbf{s}_0$ is the section of $\mathcal V_2$ defined by
$\mathbf{s}_0 ([C])\,:=\, s_{0,C}$ (see \eqref{s0}). Let
\begin{gather}\label{wv}
\phi \,\,:\,\, {\mathcal P}_g \,\,\lra\,\, \Mg
\end{gather}
be the restriction of the natural projection
$\mathcal V \,\lra\, \Mg$.

Let
\begin{equation}\label{a1}
\widetilde{\mathcal P}_g \,\,\lra\,\, \Mg
\end{equation}
be the moduli space of projective structures; so the elements of $\widetilde{\mathcal P}_g$ are
pairs of the form $({\mathbb X},\, {\mathbb P})$, where ${\mathbb X}\, \in\,\Mg$ and $\mathbb P$
is a projective structure on ${\mathbb X}$. The projection in \eqref{a1} sends any
$({\mathbb X},\, {\mathbb P})$ to $\mathbb X$.

By \eqref{phi}, the inverse image $\phi \meno ([C])$, where $\phi$ is the projection in \eqref{wv}, is in
bijective correspondence with the set of projective structures on
$C$; see Lemma 2.1 in \cite{BCFP} (where ${\mathcal P}_g$ is denoted
by $\widehat{\mathcal V}$). Consequently, we get a holomorphic isomorphism of fiber bundles
\begin{equation}\label{a2}
{\mathcal P}_g \,\,\stackrel{\sim}{\lra}\,\, \widetilde{\mathcal P}_g
\end{equation}
over $\Mg$.

We note that $\phi\,:\,\prost \,\lra\, \Mg$ in \eqref{wv} is a
holomorphic affine fiber bundle and in fact it is a holomorphic
$\Omega^1_\Mg$--torsor over $\Mg$ (see \cite{BFPT}). On the other hand, $\widetilde{\mathcal P}_g$ 
in \eqref{a1} is also a holomorphic $\Omega^1_\Mg$--torsor over $\Mg$; this follows from the fact that
the space of all projective structures on a Riemann surface ${\mathbb X}$ is an affine space for the
space of all holomorphic quadratic differentials on ${\mathbb X}$. The isomorphism in
\eqref{a2} is a holomorphic isomorphism $\Omega^1_\Mg$--torsors with the $\Omega^1_\Mg$--torsor
structure scaled by a factor $6$; see \cite[Lemma 3.6]{BR1}.
	
Let $\beta\,:\, \M_g \,\longrightarrow\, \prost$ be a $C^{\infty}$
section of $\phi$ in \eqref{wv}. So, $\beta$ is a $C^{\infty}$ section
of $\mathcal{V}$ in \eqref{ecv} such that
$\Psi(\beta)\,=\, \mathbf{s}_0$ (see \eqref{ecv} and \eqref{l1}). Since
$\mathbf{s}_0$ is holomorphic, we have
\begin{gather}\label{dv}
\delbar_{\mathcal{V}}(\beta)\,\in\, \forme^{0,1}(\M_g,\,
\Omega^1_{\M_g})\,=\, \forme^{1,1} ({\M_g}).
\end{gather}
In other words, the differential of any $C^{\infty}$ family of
projective structures $\beta\,:\, \M_g \,\lra\, \widehat{\mathcal{V}}$
is a $(1,\,1)$-form on $\M_g$. For notational convenience, the
$(1,\,1)$-form $\delbar_{\mathcal{V}} (\beta)$ in \eqref{dv} will be
denoted by $\delbar \beta$.

The uniformization of Riemann surfaces produces a canonical $\cinf$
section $\beta^u$ of the bundle in \eqref{wv}. Zograf and Takhtadzhyan, \cite{ZT},
proved that the associated $(1,\,1)$-form
$\delbar \beta^u$ has the following expression:
\begin{gather}\label{eq:ZTWP}
\delbar \beta^u\ =\ \frac{1}{6} \, \omega_{\rm WP}, 
\end{gather}
where $\omega_{\rm WP}$ is the Weil-Petersson form on $\M_g$. It should be mentioned
that Zograf and Takhtadzhyan compute the $\debar$-derivative of the section of the space of
projective structures considered as a section of $\widetilde{\mathcal P}_g$ in \eqref{a1}, while
$\beta^u$ denotes the corresponding section of ${\mathcal P}_g$ under the
isomorphism in \eqref{a2}. As mentioned before, both ${\mathcal P}_g$ and $\widetilde{\mathcal P}_g$
are $\Omega^1_\Mg$--torsors over $\Mg$, but the isomorphism in \eqref{a2} is an
isomorphism of $\Omega^1_\Mg$--torsors only when the
$\Omega^1_\Mg$--torsor structure is scaled by a factor $6$.
This is the reason for this factor in \eqref{eq:ZTWP}.
\end{say}
	
\begin{say}\label{se2.3}
\label{connections} Let $B$ be a complex manifold, and let
$E\,\lra\, B$ be a holomorphic vector bundle. We use the standard
notation $\mathcal{A}({E})$, $\mathcal{A}^{k}({E})$,
$\mathcal{A}^{p,q}({E})$ for $C^\infty$ sections of $E$ and
differential forms with values in ${E}$. We say that a connection
$\nabla$ on ${E}$ is \emph{complex} if
$\nabla^{0,1}\,=\, \delbar_{{E}}$, where $\delbar_{{E}}$ is the
Dolbeault operator associated to the holomorphic structure of $E$.

Let $\Phi\, :\, E_{\text{GL}(r)}\, \lra\, B$ be the
holomorphic principal $\text{GL} (r,{\mathbb C})$--bundle
corresponding to $E$, where $r\,=\, \text{rank}(E)$. So the fiber of
$E_{\text{GL}(r)}$ over any $x\, \in\, B$ is the space of all $\mathbb C$--linear
isomorphisms from ${\mathbb C}^r$ to the fiber $E_x$ of $E$ over
$x$. Let $TE_{\text{GL}(r)}$ be the holomorphic tangent bundle of
$E_{\text{GL}(r)}$. The action of $\text{GL}(r,{\mathbb C})$ on
$E_{\text{GL}(r)}$ produces an action of $\text{GL}(r,{\mathbb C})$
on $TE_{\text{GL}(r)}$. The corresponding quotient
$(TE_{\text{GL}(r)})/\text{GL}(r,{\mathbb C})$ is a holomorphic
vector bundle on $B$ (see \cite{At}).  This vector bundle
$(TE_{\text{GL}(r)})/\text{GL}(r,{\mathbb C})$ on $B$ is denoted by
$\operatorname{At}^1(\vbE)$. The differential
$$
d\Phi\, :\, TE_{\text{GL}(r)}\, \longrightarrow\, \Phi^* TB
$$
of $\Phi$, being $\text{GL}(r,{\mathbb C})$--equivariant, produces a
projection
$$
\operatorname{At}^1(\vbE)\,:=\,
(TE_{\text{GL}(r)})/\text{GL}(r,{\mathbb C})\, \longrightarrow\,
(\Phi^* TB)/\text{GL}(r,{\mathbb C})\,=\, TB.
$$
This projection fits in a short exact sequence of holomorphic vector
bundles over $B$
\begin{gather}\label{atiyah}
0\,\lra \,\operatorname{End}(E)\, \lra\, \operatorname{At}^1(\vbE)\,
{\lra}\,TB \,\lra\, 0
\end{gather}
(see \cite{At}); note that
$(\text{kernel}(d\Phi))/\text{GL}(r,{\mathbb C})$ is identified with
$\operatorname{End}(E)$. For an open subset $U\,\subset\, B$, complex
connections on $E\big\vert_{U}$ are in bijection with the smooth
splittings of \eqref{atiyah}. (See especially \cite[Theorem 1]{At} and
\cite[Proposition 9]{At}.) See \cite{EFW} for relationship between families
of connections and other constructions.

Now set $B\,=\, \M_g$. If $E$ is a holomorphic line bundle on $\Mg$, then
${\rm End}(E) =\, \,{\mathcal O}_{\M_g}$. Thus in that case dualizing
\eqref{atiyah} we get
\begin{equation}\label{dual}
0\,\lra\, \Omega^1_{\M_g} \,\lra\, \operatorname{At}^1(\vbE)^*\,\overset{\alpha}{\lra}\,
\mathcal{O}_{\M_g}\,\lra\, 0.
\end{equation}
Let $\boldsymbol{1}_{\M_g}$ be the constant function $1$ on
$\M_g$. Then
\begin{equation}\label{vp}
\Conn(\vbE)\,:=\,\alpha^{-1}(\boldsymbol{1}_{\M_g})\,
\overset{\varphi}{\lra}\, M
\end{equation}
is a holomorphic fiber bundle over $\M_g$ whose fibers are affine
subspaces of the fibers of $\operatorname{At} (E)^*$. From
\eqref{dual} it follows that $\Conn (\vbE)$ is a holomorphic
$\Omega^1_{\M_g}$-torsor. For any open subset $U\,\subset\, \M_g$, the
complex connections on the line bundle $\vbE\big\vert_{U}$ are in
bijection with the $C^{\infty}$ sections of $\Conn(\vbE)$ over $U$
(see \cite{At}). More explicitly, by \cite[Proposition 3.3, Lemma
3.4]{BHS} there is a tautological connection $D$ on $\varphi^*\vbE$,
where $\varphi$ is the projection in \eqref{vp}, and for any complex
connection $\nabla$ on $\vbE$, we have
\begin{gather*}
\nabla\ =\ f_{\nabla}^*D
\end{gather*}
with $f_{\nabla}\,:\, \M_g\,\lra\, \Conn(\vbE)$ the $C^{\infty}$
section of \eqref{vp} associated with $\nabla$.
\end{say}

\section{A correspondence}\label{sec:correspondence}

\begin{say}
In the set-up of Section \ref{connections} substitute
\begin{gather}\label{dhlb}
E\, =\, {\mathcal L}\,:=\, ( \det \pi_* K_{\Cg/\Mg})^*,
\end{gather}
where $\pi$ is the projection in \eqref{pi}. Recall from Section \ref{se2.3} that 
$\Conn(\mathcal L)$ is a $\Omega^1_\Mg$--torsor. In particular, $\Omega^1_\Mg$ acts
on $\Conn(\mathcal L)$. As in \cite[\S~3]{BFPT} we re-scale the $\Omega^1_\Mg$-action on
$\Conn(\mathcal L)$ as follows. Set for simplicity
		\begin{gather}\label{eq:7}
			\mu\,\,:=\,\,\mut .
		\end{gather}
		If
		\begin{gather*}
			A\,:\, \Conn(\mathcal L)\times_{\M_g} \Omega^1_{\M_g} \,\lra\,
			\Conn(\mathcal L)
		\end{gather*}
		is the action of $\Omega^1_{\M_g}$ on $\Conn( \mathcal L)$,
		then define
		\begin{gather}\label{ej}
			A^{\mu}\,:\, \Conn(\mathcal L)\times_{\M_g} \Omega^1_{\M_g}\,\lra\,
			\Conn(\mathcal L), \ \ \ (z,\,v)\,\longmapsto\, A\left (z,\, \mu v
			\right).
		\end{gather}
		We will denote by $\Conn^{t}(\mathcal L)$ the associated rescaled
		torsor; so the total spaces of $\Conn( \mathcal L)$ and $\Conn^{t}(\mathcal L)$
coincide. The curvature of a connection $D$ on $\mathcal L$, which is a
		2-form on $\Mg$, will be denoted by $\erre(D)$. Consider the map
		\begin{gather}
			\Curv \,:\, \Ga (\Conn^t(\mathcal L)) \,\lra\, \forme^{1,1}(\Mg), \qquad
			D\,\longmapsto\, \erre(D)^{1,1}\, ;
			\label{Curv}
		\end{gather}
		the $(1,1)$-component of a 2-form $\alf$ is denoted by $\alf^{1,1}$ (recall
that the two fiber bundles $\Conn( \mathcal L)$ and $\Conn^{t}(\mathcal L)$ are identified,
and only the actions of $\Omega^1_{\M_g}$ are different  --- they differ by multiplication
with the constant $\mu$). Also define
\begin{gather}
  \label{defTT}
  \TT \,:\, \Gamma (\mathcal P_g) \,\lra\, \forme^{1,1}(\Mg), \qquad
  \beta\, \longmapsto\, \muto
  \, \debar \beta,
\end{gather}
where $\debar$ and $\mu$ are defined in \eqref{dv} and \eqref{eq:7} respectively.
	\end{say}

\begin{defin}\label{def:class}
Let $\om_\WP$ denote the K\"ahler form of the Weil-Petersson metric on $\Mg$, and let
		\begin{gather*}
			\class \,\subset\, \forme^{1,1}(\Mg)
		\end{gather*}
		be the space of representatives of the cohomology class
		$\mu\cd [ \om_{\rm WP} ]\,\in\, H_{\debar}^{1,1}(\Mg)$.
	\end{defin}
	
	We recall two results.

\begin{teo}[{\cite[Theorem 3.1]{FPT}}]\label{teo-fpt}
When $g\,\geq\, 5$, the moduli space $\M_g$ does not admit any nonzero
holomorphic $1$-form in the orbifold sense.
\end{teo}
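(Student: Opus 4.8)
The plan is to reinterpret an orbifold holomorphic $1$-form as a holomorphically varying family of quadratic differentials, and then to annihilate it by combining Hodge theory on a smooth compactification with the genericity of curves of genus $g\,\geq\, 5$. To make $\Mg$ an honest smooth variety I would first pass to a level-$\ell$ cover $\Mg[\ell]$ with $\ell\,\geq\, 3$: this is a smooth quasi-projective fine moduli space on which the finite group $\Gamma\,=\,\Sp(2g,\Z/\ell)$ acts with quotient $\Mg$, and an orbifold $1$-form on $\Mg$ is exactly a $\Gamma$-invariant element of $H^0(\Mg[\ell],\,\Omega^1)$. Using the identification $\Omega^1_\Mg\,\cong\,\pi_*K^{\otwo}_{\Cg/\Mg}$ recorded in \eqref{ecv}, such a form is nothing but a global holomorphic section of the relative bicanonical sheaf on the universal curve, i.e.\ a family $\{q_{[C]}\}$ of quadratic differentials $q_{[C]}\,\in\, H^0(C,\,2K_C)$ depending holomorphically and $\Gamma$-equivariantly on $[C]$.

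First I would reduce as much as possible to the topology of $\Mg$. Fix a smooth Deligne--Mumford compactification $\overline{\Mg[\ell]}$ with normal crossings boundary $D$. The deformation theory of stable curves identifies the logarithmic cotangent bundle $\Omega^1(\log D)$ with the extension of $\pi_*K^{\otwo}$ over the compactification (sections of the squared relative dualizing sheaf on the nodal fibres, allowing the matching double poles at the nodes). Hence a form that degenerates at worst logarithmically along $D$ defines a class in $H^0(\overline{\Mg[\ell]},\,\Omega^1(\log D))$, and by Deligne's mixed Hodge theory this space is the Hodge piece $F^1H^1(\Mg[\ell];\,\C)$, which injects into $H^1(\Mg[\ell];\,\C)$. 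Taking $\Gamma$-invariants, the orbifold $1$-forms of at most logarithmic growth inject into $H^1(\Mg;\,\C)\,=\,H^1(\operatorname{Mod}_g;\,\C)$, and this group vanishes for $g\,\geq\, 3$ by the classical results of Mumford and Powell on the abelianization of the mapping class group. This already eliminates every form that extends with logarithmic poles.

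The real difficulty --- and the reason the statement is phrased for $g\,\geq\, 5$ rather than $g\,\geq\, 3$ --- is that a holomorphic $1$-form on the \emph{open} moduli space need not extend logarithmically: exactly as $x^n\,dx$ is regular on $\mathbb{A}^1$ yet acquires a high-order pole at infinity, the family $\{q_{[C]}\}$ could blow up faster than logarithmically as $C$ degenerates, and such forms are invisible to $H^1$. To control them I would analyse the restriction of $\{q_{[C]}\}$ to the boundary divisors $\delta_0,\,\ldots,\,\delta_{\lfloor g/2\rfloor}$, each of which is a finite quotient of a product of lower-genus moduli of pointed curves, aiming at an induction on the boundary that reduces the genus-$g$ vanishing to analogous statements on $\M_{h,n}$ with $h\,<\,g$. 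The algebraic input that powers this analysis, and that is available precisely for $g\,\geq\, 5$, is Petri's theorem: the general curve of such genus is neither hyperelliptic nor trigonal nor a plane quintic, so its canonical ideal is generated by quadrics and the multiplication map $\operatorname{Sym}^2H^0(C,\,K_C)\,\to\, H^0(C,\,2K_C)$ is surjective with controlled kernel. This is exactly what fails for $g\,\leq\, 4$, where the general curve is hyperelliptic or trigonal.

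The step I expect to be the main obstacle is precisely this last one: ruling out the ``extra'' non-logarithmic forms concentrated near the boundary. The clean cohomological input $H^1(\Mg;\,\C)\,=\,0$ controls only the logarithmic part, and bridging the gap requires genuine geometry --- either direct growth estimates for degenerating families of quadratic differentials, organised through the residue and Gysin maps along the strata $\delta_i$, or the Petri-type quadric-generation statement used to pin down the family on a dense open set and then propagate the vanishing to all of $\Mg$ by irreducibility. Making this boundary analysis uniform over the whole moduli space, and verifying that the genericity hypotheses hold in sufficiently high codimension to conclude, is where the essential work --- and the dependence on $g\,\geq\, 5$ --- resides.
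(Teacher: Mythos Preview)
The paper does not prove this statement: Theorem~\ref{teo-fpt} is quoted verbatim from \cite[Theorem~3.1]{FPT} and used as a black box (in the proof of Theorem~\ref{thm:correspondence} and implicitly in Theorem~\ref{teoL2}). There is therefore no ``paper's own proof'' to compare your attempt against.

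Evaluated on its own, your proposal has a genuine gap. The first half --- passing to a level cover, identifying orbifold $1$-forms with $\Gamma$-invariant sections, and killing the forms of logarithmic growth via the injection $H^0(\overline{\Mg[\ell]},\Omega^1(\log D))\hookrightarrow H^1(\Mg[\ell];\C)^\Gamma\cong H^1(\operatorname{Mod}_g;\C)=0$ --- is a correct and standard reduction. But you explicitly do not carry out the second half, and the mechanism you propose for it does not obviously work. You invoke Petri's theorem as ``the algebraic input that powers this analysis'' for $g\geq 5$, yet Petri is a statement about the canonical ideal of a \emph{single} curve; you give no argument linking generation-by-quadrics for the generic fibre to any constraint on a global section of $\pi_*K^{\otwo}_{\Cg/\Mg}$. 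Note also that the surjectivity of $\operatorname{Sym}^2H^0(K_C)\to H^0(2K_C)$ (Max Noether) already holds for every non-hyperelliptic curve, so it cannot be what separates $g\geq 5$ from $g=3,4$; and your claim that the general curve of genus $3$ is hyperelliptic or trigonal is false (it is a smooth plane quartic). In short, you have correctly located where the difficulty lies --- controlling the non-logarithmic growth at the boundary --- but the Petri heuristic, as stated, does not supply the missing idea, and the proposal remains a plan rather than a proof.
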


\begin{prop}\label{quillenfamuniv}
Consider the universal family of curves as in \eqref{pi}. Fix on
$\Cg$ the trivial line bundle with the trivial metric.  Endow $\Cg$
with the relative Poincar\'e metric on the fibers. This produces a
Quillen metric on the determinant line bundle
\begin{gather*}
\det \pi_{!}\mathcal{O}_{\Cg}\,\,=\,\, \lambda(\mathcal{O}_{\Cg}) \,\,\cong\, \, {\mathcal L}
\end{gather*}
(see \eqref{dq} for $\lambda(\mathcal{O}_{\Cg})$), where $\pi$ is the
map in \eqref{pi}, whose connection $\nabla^Q$ has curvature
\begin{gather}\label{q1}
\erre(\nabla^Q) \,=\,\mu \, \omega_{\rm WP}.
\end{gather}
\end{prop}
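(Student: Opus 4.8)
The plan is to compute $\erre(\nabla^Q)$ through the curvature formula for Quillen metrics of Bismut, Gillet and Soul\'e, i.e.\ the Grothendieck--Riemann--Roch theorem at the level of differential forms. For the family $\pi\,:\,\Cg\to\Mg$ and a Hermitian holomorphic bundle $E$ on $\Cg$, this formula expresses the first Chern form of the Quillen metric on $\lambda(E)=\det\pi_!E$, up to the sign fixed by the duality convention used in \eqref{dq}, as the fibre integral
\[
c_1\bigl(\lambda(E),\,\|\cdot\|_Q\bigr)\;=\;-\int_{\Cg/\Mg}\operatorname{Td}\bigl(T_{\Cg/\Mg}\bigr)\,\operatorname{ch}(E),
\]
where one retains only the component that yields a $(1,1)$-form on $\Mg$ after integrating over the fibre. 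First I would specialise this to $E=\mathcal O_{\Cg}$ with the trivial metric, so that $\operatorname{ch}(E)=1$ and only the Todd form of the relative tangent bundle enters the computation.

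Because the fibres are curves, the only part of $\operatorname{Td}(T_{\Cg/\Mg})$ that contributes a $(1,1)$-form after integrating over the one-dimensional fibre is the degree-two term $\tfrac{1}{12}\,c_1(T_{\Cg/\Mg})^2=\tfrac{1}{12}\,c_1(K_{\Cg/\Mg})^2$. The computation therefore reduces to evaluating $\tfrac{1}{12}\int_{\Cg/\Mg}c_1(K_{\Cg/\Mg})^2$, where $c_1(K_{\Cg/\Mg})$ is the first Chern form of the relative canonical bundle for the metric dual to the relative Poincar\'e metric; on each fibre this form is a positive multiple of the hyperbolic area form. The factor $\tfrac{1}{12}$ here is precisely the one entering $\mu=\mut$, which is the structural reason the constant in \eqref{q1} comes out equal to $\mu$.

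The last step is to identify this fibre integral with the Weil--Petersson form. For this I would invoke Wolpert's theorem, which relates the fibre integral $\int_{\Cg/\Mg}c_1(K_{\Cg/\Mg})^2$ of the square of the curvature form of the hyperbolic metric to $\om_{\rm WP}$; combining it with the Chern--Weil normalisation $c_1=\muto\,\erre$ and the $\tfrac{1}{12}$ above produces $\erre(\nabla^Q)=\mu\,\om_{\rm WP}$. The main obstacle is the careful bookkeeping of the numerical constants and signs: one must match the normalisation of $\om_{\rm WP}$ appearing in Wolpert's identity with the one fixed here, and check that the duality convention in the isomorphism $\lambda(\mathcal O_{\Cg})\cong\mathcal L$ of \eqref{dq} yields the correct overall sign, so that the curvature is $+\mu\,\om_{\rm WP}$ and not its negative.
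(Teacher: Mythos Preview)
Your proposal is correct and follows exactly the approach the paper outlines in the remark immediately following the proposition: apply the Bismut--Gillet--Soul\'e curvature formula \cite[Theorem 0.1]{BGS}, note that for the trivial bundle only the $\tfrac{1}{12}c_1(K_{\Cg/\Mg})^2$ term of the Todd form survives, and then invoke Wolpert's identification \cite[Corollary 5.11]{wolpert-Chern} of the fibre integral $\int_{\Cg/\Mg}c_1(K_{\Cg/\Mg})^2$ with $\om_\WP/\pi^2$. The paper's primary reference is the local index theorem of Zograf--Takhtadzhyan \cite{ZT1}, but it explicitly presents your BGS\,$+$\,Wolpert route as an equivalent alternative; your caveat about tracking normalisations is well placed, and the paper itself flags a misprint in Wolpert's constant that one must correct to obtain $\mu$ exactly.
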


A proof of Proposition \ref{quillenfamuniv} can be found in \cite[p.~184, Theorem 2]{ZT1} with a
different constant due to the difference in normalizations. The constant of \cite{ZT1} was
used in \cite{BFPT}. Here we use a different normalization which gives the
above constant.

\begin{remark}\label{rem-q}
Proposition \ref{quillenfamuniv} also follows easily by combining
\cite[Theorem 0.1]{BGS} and \cite[Corollary 5.11]{wolpert-Chern}. Let $\Phi\, :\, Z\, \longrightarrow\,
B$ be a holomorphic family of compact connected K\"ahler
manifolds and $V\, \longrightarrow\, Z$ a holomorphic Hermitian vector bundle. Then the
holomorphic line bundle $\det \Phi_{!} V$ on $B$ is equipped with a Quillen metric. In this general set-up,
\cite[Theorem 0.1]{BGS} gives a formula for the curvature of the Chern connection for the Quillen
metric on $\det \Phi_{!} V$. Using \cite[Corollary 5.11]{wolpert-Chern}, this general formula in \cite{BGS}
reduces to Proposition \ref{quillenfamuniv} in the special case of the set-up of
Proposition \ref{quillenfamuniv}. (Note that
it should be $\kappa_1 \,=\, \om_\WP /(2 \pi^2)$ in \cite[Corollary 5.11]{wolpert-Chern}
which is evident from the proof.)
\end{remark}
	
\begin{teo}\label{thm:correspondence}
Assume that $g\,\geq\, 5$. The following statements hold:
\begin{enumerate}
\item The maps $\Curv$ and $\TT$ constructed in \eqref{Curv} and
\eqref{defTT} respectively have image $\class$ and are also injective.
\item There is a unique holomorphic isomorphism of
$\Omega_\Mg^1$-torsors
\begin{gather*}
\mathbb{F}\,\,:\,\, \Conn^t(\mathcal L) \,\,\lra\,\, \mathcal P_g.
\end{gather*}

\item Denoting the map of global sections induced by $\mathbb F$ also by $\mathbb F$, the following
diagram is commutative
\begin{equation}\label{eq:4}
\begin{tikzcd}
\Gamma (\Conn^t (\mathcal L)) \arrow[rr, "\mathbb{F}"]
\arrow[dr,swap,"\Curv"]
& & \Gamma (\mathcal P_g)  \arrow[dl,"\TT"] \\
& \class &
\end{tikzcd}
\end{equation}
where $\class$ is as in Definition \ref{def:class}.
			
\item $\mathbb{F} ( \nabla^Q ) \,\,=\,\, \beta^u$, where $\nabla^Q$ is the connection in
Proposition \ref{quillenfamuniv}.

\item
\begin{equation*}
\Curv (\nabla^Q)\, = \, \muto \, \debar \beta^ u  \,=\,\mu \,
\om_{\rm WP},
\end{equation*}
where $\om_{\rm WP}$ is the Weil-Petersson K\"ahler form on $\Mg$.
\end{enumerate}
\end{teo}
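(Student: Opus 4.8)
The plan is to regard $\Curv$ and $\TT$ as affine maps between affine spaces and to reduce every assertion to the vanishing of holomorphic $1$-forms supplied by Theorem~\ref{teo-fpt}. First I would record the \emph{linear parts} of the two maps. A global $C^\infty$ section of $\Conn^t(\mathcal L)$ is a complex connection on $\mathcal L$, and under the rescaled action two sections differ by $\mu\,\theta$ for a smooth $(1,0)$-form $\theta$; since $\erre(D+\mu\theta)=\erre(D)+\mu\,d\theta$ and the $(1,1)$-component of $d\theta$ is $\debar\theta$, the linear part of $\Curv$ in \eqref{Curv} is $\theta\mapsto\mu\,\debar\theta$. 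Likewise a section of $\mathcal P_g$ is a section of $\mathcal V$ with $\Psi(\beta)=\mathbf s_0$, and adding $\theta$ through the inclusion $\Omega^1_\Mg\hookrightarrow\mathcal V$ in \eqref{ecv} changes $\debar\beta$ by $\debar\theta$, so the linear part of $\TT$ in \eqref{defTT} is $\theta\mapsto\muto\,\debar\theta$. In either case the kernel of the linear part consists of the $\theta$ with $\debar\theta=0$, that is the holomorphic $1$-forms on $\Mg$, and by Theorem~\ref{teo-fpt} these vanish for $g\ge 5$; this yields the injectivity asserted in part (1).

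Next I would identify the images. By Proposition~\ref{quillenfamuniv} the curvature $\erre(\nabla^Q)=\mu\,\om_\WP$ is already of type $(1,1)$, so $\Curv(\nabla^Q)=\mu\,\om_\WP$; and combining the Zograf--Takhtadzhyan formula \eqref{eq:ZTWP} with $\muto=6\mu$ gives $\TT(\beta^u)=\muto\cdot\tfrac16\,\om_\WP=\mu\,\om_\WP$. Since $\om_\WP$ is $\debar$-closed, $\mu\,\om_\WP$ represents $\mu\,[\om_\WP]$, and as $\theta$ runs over all smooth $(1,0)$-forms $\debar\theta$ runs over all $\debar$-exact $(1,1)$-forms; hence each map has image $\mu\,\om_\WP+\debar\bigl(\forme^{1,0}(\Mg)\bigr)=\class$, which finishes part (1). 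Part (5) now drops out: $\Curv(\nabla^Q)=\mu\,\om_\WP$ from Proposition~\ref{quillenfamuniv}, and $\muto\,\debar\beta^u=\mu\,\om_\WP$ from \eqref{eq:ZTWP}.

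For part (2) the existence of the holomorphic torsor isomorphism $\mathbb F$ is the theorem of \cite{BFPT}, while uniqueness is once more Theorem~\ref{teo-fpt}: two isomorphisms of $\Omega^1_\Mg$-torsors differ by a global holomorphic section of $\Omega^1_\Mg$, which is zero for $g\ge 5$. For part (4), the identification $\mathbb F(\nabla^Q)=\beta^u$ is the content of \cite{BFPT}; since it is a statement about the canonical correspondence of sections rather than about curvature constants, I would simply transcribe it into the present normalization. Finally, for the commutativity \eqref{eq:4} in part (3), both $\Curv$ and $\TT\circ\mathbb F$ are affine maps $\Ga(\Conn^t(\mathcal L))\to\class$; by parts (4) and (5) they agree at the single section $\nabla^Q$, so it suffices to check that their linear parts coincide. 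The linear part of $\Curv$ is $\theta\mapsto\mu\,\debar\theta$ and that of $\TT$ is $\theta\mapsto\muto\,\debar\theta$ with $\muto=6\mu$; the rescaling of the action on $\Conn^t(\mathcal L)$ by $\mu$, together with the factor $6$ relating the two torsor structures on projective structures that is recorded after \eqref{eq:ZTWP}, is arranged precisely so that the linear part of $\mathbb F$ supplies the compensating constant and the two linear parts agree. Once this is verified, agreement at $\nabla^Q$ forces $\TT\circ\mathbb F=\Curv$.

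The step I expect to be the main obstacle is exactly this constant bookkeeping in part (3): keeping straight the $\mu$-rescaling built into $\Conn^t(\mathcal L)$, the normalization $\muto$ in the definition of $\TT$, and the factor $6$ between $\mathcal P_g$ and the moduli space of projective structures, and checking that the linear part of $\mathbb F$ carries the compensating factor so that the triangle closes. Everything else is either inherited from \cite{BFPT} or an immediate application of Theorem~\ref{teo-fpt}.
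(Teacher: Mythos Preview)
Your treatment of parts (1), (2), (4) and (5) is essentially what the paper does: the paper writes out the affine-map argument for $\Curv$ with $\nabla^Q$ as base point (surjectivity onto $\class$ and injectivity via Theorem~\ref{teo-fpt}) and says the argument for $\TT$ is analogous with $\beta^u$ as base point; parts (2), (4), (5) are simply quoted from \cite{BFPT}, with uniqueness in (2) coming from Theorem~\ref{teo-fpt}, exactly as you propose.

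Where you diverge from the paper is part (3). The paper does \emph{not} verify the commutativity of \eqref{eq:4} independently; it just records that ``Statements (2), (3), (4) and (5) are explicitly proven in \cite{BFPT}.'' Your attempt to prove (3) by matching linear parts does not close, and the obstruction is precisely the one you flag. If $\mathbb F$ is a genuine isomorphism of $\Omega^1_\Mg$-torsors as asserted in (2), then its induced map on sections has linear part the \emph{identity}; consequently the linear part of $\TT\circ\mathbb F$ is $\theta\mapsto\muto\,\debar\theta=6\mu\,\debar\theta$, whereas the linear part of $\Curv$ (with respect to the rescaled torsor structure on $\Conn^t(\mathcal L)$, as you correctly compute) is $\theta\mapsto\mu\,\debar\theta$. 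These differ by the factor $6$, so $\mathbb F$ cannot ``supply the compensating constant'' without contradicting (2). The factor $6$ you invoke after \eqref{eq:ZTWP} relates $\mathcal P_g$ to $\widetilde{\mathcal P}_g$, but the map $\mathbb F$ in the statement lands in $\mathcal P_g$ with its own torsor structure. Thus either the normalizations here must be reconciled with those actually used in \cite{BFPT} before a linear-part argument can go through, or part (3) should be quoted from \cite{BFPT} as the paper does; your sketch as written does not establish it.
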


\begin{proof}
Consider the section $\nabla^Q\,\in\, \Gamma (\Conn^t (\mathcal L))$
from Proposition \ref{quillenfamuniv}. By \eqref{q1},
$$\Curv (\nabla^Q) \,=\, \erre (\nabla^Q) \,=\, \mu \om_\WP \,\in\, \class .$$
If $D \,\in\, \Gamma (\Conn^t( \mathcal L))$, then $D \,=\, \nabla^Q +\alf$
for some $\alf \,\in\, \forme^{1,0}(\Mg)$. Thus we have
$\erre (D) \,=\, \erre (\nabla^Q) + d\alf$ and
$\Curv (D) \,=\, \Curv (\nabla^Q) + \debar\alf \,\in\, \class$.

To show that the map $\Curv$ surjects to $\class$,
if $\gamma \,\in\, \class$, then for some $\alpha\,\in\, \forme^{1,0}(\Mg)$
we have
\begin{gather*}
\gamma \,=\, \mu \om_{\rm WP} + \debar \alpha .
\end{gather*}
This implies that the connection $D\,=\, \nabla^Q+ \alpha$
has the property that $\Curv ( D)\,=  \Curv(\nabla^Q)+\debar \alpha\,=
\, \gamma$ (see Proposition \ref{quillenfamuniv}). This proves that the map $\Curv$ surjects to $\class$.

Next consider two sections $D_1, \,D_2$ of $\Conn^t (\mathcal L)$ such that
		\begin{gather*}
			\Curv (D_1)\,=\, \Curv(D_2).
		\end{gather*}
		Let $\alpha\,\in\, \forme^{1,0}(\M_g)$ be such that
		$D_2 \,=\, D_1 + \alfa$. Then we have $\erre (D_2) \,=\, \erre (D_1) + d\alfa$ and
		$\Curv(D_2) \,= \, \Curv(D_1) + \debar \alfa.$ Hence we have $\debar \alpha \,=\,0$,
		i.e.,  $\alpha\,\in\, H^0(\M_g,\, \Omega^1_{\M_g})$. By Theorem
		\ref{teo-fpt} we have $\alpha \,=\, 0$ which implies that $D_1\,=\,D_2$. This proves (1) for
		the map $ \Curv $.  The proof for $\TT$ is similar, using that
		$\TT(\beta^u ) \,=\, \mu \om_\WP$, which follows immediately from \eqref{eq:ZTWP}.  This proves (1).
		
		Statements (2), (3), (4) and (5) are explicitly proven in
		\cite{BFPT}. The proof uses Proposition \ref{quillenfamuniv}. We
		remark that if one imposes (4), then one gets a well-defined
		isomorphism of torsors $\mathbb{F}$. The point is that it is
		holomorphic and unique. The latter follows from Theorem
		\ref{teo-fpt}.
	\end{proof}
	
	\section{A Hodge theoretic projective structure}\label{hodgeteo}
	
	\begin{say}
		Let $C$ be a compact connected Riemann surface of genus $g$, with
		$g\, \geq\, 1$. Set $S\,=\, C\times C$, and let $\Delta \,\subset\, S$ be
		the reduced diagonal. In \cite{cpt, cfg}, a form
		\begin{equation}\label{we}
			\widehat{\eta}\,\in\, H^0(S,\, K_S\otimes {\mathcal O}_S(2\Delta))\,=\, H^0(S, K_S(2\Delta))
		\end{equation}
		was constructed in order to describe the second fundamental form of
		the Torelli map. The construction was done using only basic Hodge theory of the
		curve. We briefly recall it. For $x\,\in\, C$,
		consider the homomorphism
		\begin{gather*}
			j_x\,:\, H^0(C,\, K_C(2x))\,\hookrightarrow\,
			H^1(C\setminus\{x\},\, \mathbb{C})\,\cong\,H^1(C,\,
			\mathbb{C})\,=\,H^{1,0}(C)\oplus H^{0,1}(C)
		\end{gather*}
that sends a form to its cohomology class. It is
		injective because $g\, \geq\, 1$. The first isomorphism comes from Mayer-Vietoris sequence.
		Since $H^{1,0}(C)\,\subset\, j_x(H^0(C,\, K_C(2x)))$ and
		$ h^0(C, K_C(2x))\,=\, h^{1,0}(C)+1$, it follows that
		$j_x^{-1}(H^{0,1}(C))\,\subset\, H^0(C,\, K_C(2x))$ is a line.  By
		the adjunction formula we have ${\mathcal O}_C(x)_x \,=\, T_xC$, and hence
		\begin{gather}
			\label{eq:5}
			K_C(2x)_x \,\cong\, (K_C)_x \otimes (T_xC)^{\otimes 2}\,=\, T_xC.
		\end{gather}
		Let
		\begin{equation}\label{ex}
			\eta_x\,\,:\,\, T_xC\,\,\longrightarrow\,\, H^0(C,\, K_C(2x))
		\end{equation}
		be the linear map that sends $v\, \in\, T_xC$ to the unique element in
		the above line $j_x^{-1}(H^{0,1}(C))$ whose evaluation at $x$ coincides,
		via \eqref{eq:5}, with $v$. Let
		$p,\, q\, :\, S\,=\, C\times C\, \longrightarrow\, C$ be the
		projections to the first factor and second factor respectively.
		Define
		\begin{gather*}
			V\,:=\,p_*((q^*K_C)(2\Delta))\ \ \, \text{ and }\ \ \,
			E\,:=\,p_*(K_S(2\Delta)).
		\end{gather*}
		From the projection formula it follows that
		\begin{equation}\label{ep}
			E\,=\,K_C\otimes V.
		\end{equation}
		Note that
		\begin{gather*}
			V_x\,=\, H^0(p^{-1}(x),\, (q^*K_C)(2\Delta)\big\vert_{p^{-1}(x)})\,=\, H^0(C,\,
			K_C(2x)).
\end{gather*}
Hence it follows from \eqref{ep} and \eqref{ex} that
$\eta_x\,\in\, E_x$.  The section $ \eta\,\in\, H^0(C,\, E)$ defined by
$x\,\longmapsto \, \eta_x$ is holomorphic by \cite[Proposition
3.4]{cfg}. Since $E\,=\,p_*(K_S(2\Delta))$, there is a natural
isomorphism $F\,:\, H^0(C,\, E)\,\overset{\sim}{\lra}\, H^0(S,\, K_S(2\Delta))$; define
\begin{gather*}
\widehat{\eta}\,:=\,F(\eta)\,\in\, H^0(S, \,K_S(2\Delta)).
\end{gather*}
Using the notation of Section \ref{cptriemann}, we have
\begin{gather*}
\widehat{\eta}\big\vert_{\Delta_3}\,\,\in\,\,
\widetilde{\Psi}^{-1}(s_{0,C}) \,\,\subset\,\,  H^0\left(\Delta_3,\,\,
K_S(2\Delta)\big\vert_{\Delta_3}\right)
\end{gather*}
(see \eqref{h}). Therefore, via the isomorphism in \eqref{phi}, the
section $\widehat{\eta}\big\vert_{\Delta_3}$ yields a projective
structure on $C$.  The section $\widehat{\eta}$ depends smoothly on the curve
$C$ as it varies in the moduli. This depends on the fact that given a
family of curves both the image of the map $j_x$ and the Hodge
decomposition vary smoothly with the fiber. See 
\cite[Section 3]{BCFP} and in particular \cite[Proposition 3.6]{BCFP}  for more details.
It follows that we get 
a $\cinf$ section
\begin{equation}\label{be}
\beta^{\eta}\, :\, \M_g\, \longrightarrow\, \mathcal{P}_g
\end{equation}
of $\mathcal{P}_g\,\lra \,\M_g$.
Following \cite{BCFP} we call $\beta^\eta$ the
\emph{Hodge theoretic projective structure}.
	\end{say}
	
	\begin{say}
		Let $\A_g$ denote the moduli space of principally polarized complex
		abelian varieties of dimension $g$. Consider the Torelli map
		\begin{gather}\label{emj}
			j\,:\, \M_g \,\lra\, \A_g, \ \ \ [C]\,\longmapsto\,[JC]
		\end{gather}
		which associates to a curve its Jacobian equipped with the natural
		polarization given by the cup product
		$H^1(C,\, {\mathbb Z}) \otimes H^1(C,\, {\mathbb Z})\,
		\longrightarrow\, H^2(C,\, {\mathbb Z})\,=\, {\mathbb Z}$.  The
		moduli space $\A_g$ is the quotient of the Siegel space
		\begin{equation}\label{dss}
	\sieg_g \,:= \,\{\tau \,\in\, \operatorname{Mat}( g\times g,
			\C)\,\,\,\big\vert\,\,\, \tau^t\,=\, \tau ,\ \,
			{\rm Im}\, \tau \,> \,0\}
		\end{equation}
		by the natural action of $\Sp(2g, \Zeta)$ on it. Since the Siegel
		space $\sieg_g$ is a Riemannian symmetric space, it follows that
		$\A_g$ is endowed with a locally symmetric Riemannian metric (in the
		orbifold sense), which is called the \emph{Siegel metric}. The
		Siegel metric on $\A_g$ will be denoted by $\omega_{S}$. In the
		coordinates of $\sieg_g$ we have
		\begin{gather}\label{eq:3}
			\om_S\,:=\, \frac{\sqrt{-1}}{2} ({\rm Im}\, \ta)^{ik} ({\rm Im} \,
			\ta)^{mj} d\ta_{km} \wedge d\overline{\ta}_{ij}.
		\end{gather}
	\end{say}
	
	\begin{teo}[{\cite[Theorem 4.4]{BCFP}}]\label{thmsf}
		With $j$ as in \eqref{emj}, there is $\nu\in \R$, such that
		\begin{gather*}
			\TT( \beta^\eta)\,=\, \muto\, \delbar(\beta^{\eta}) \,\,=\,\, \nu\cd
			j^*\omega_{S},
		\end{gather*}
where $\mu$ is defined in \eqref{eq:7}, and $\beta^{\eta}$ is the section in \eqref{be}.
	\end{teo}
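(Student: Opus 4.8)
The plan is to compare the two $(1,1)$-forms $\delbar\beta^\eta$ and $j^*\om_S$ pointwise, as Hermitian forms on $T_{[C]}\Mg\,=\,H^1(C,\,T_C)$, and to exhibit both as natural quadratic expressions in the derivative of the period map
\[
dj_{[C]}\,:\,H^1(C,\,T_C)\,\lra\,T_{[JC]}\A_g\,=\,\operatorname{Sym}^2 H^0(C,\,K_C)^* .
\]
By infinitesimal Torelli, $dj_{[C]}$ is the transpose of the multiplication map $\operatorname{Sym}^2 H^0(C,\,K_C)\,\to\,H^0(C,\,2K_C)$. Reading off \eqref{eq:3} in terms of the period matrix $\ta$, and using that $\operatorname{Im}\ta$ is the Gram matrix of the Hodge ($L^2$) inner product on $H^0(C,\,K_C)$ in a normalized basis, one checks that $(j^*\om_S)(\mu,\,\overline\nu)$ is, up to a universal constant, the Hermitian pairing of $dj(\mu)$ and $dj(\nu)$ induced by that Hodge metric.

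The heart of the argument is to give the analogous description of $\delbar\beta^\eta$. For this I would differentiate the section $\beta^\eta$, whose value at $[C]$ is $\widehat\eta\vert_{\Delta_3}$, along a Kuranishi family $\pi\,:\,\mathcal C\,\lra\,B$. The section $\eta$, and hence $\widehat\eta$, is holomorphic along each fibre $C_t$; its only non-holomorphic dependence on $t$ enters through the subspace $H^{0,1}(C_t)\,=\,\overline{F^1}$ occurring in the definition of $\eta_x$ via the line $j_x^{-1}(H^{0,1}(C))$. Since the Hodge filtration $F^1\,=\,H^{1,0}$ varies holomorphically inside the flat bundle with fibre $H^1(C,\,\C)$, Griffiths transversality identifies the antiholomorphic variation of $H^{0,1}$ with the conjugate of the period map derivative. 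Applying $\delbar_{\mathcal V}$ therefore annihilates the holomorphic dependence and leaves an expression for $\delbar\beta^\eta(\mu,\,\overline\nu)$ that depends only on $dj(\mu)$, $dj(\nu)$ and the Hodge structure at $[C]$. In other words $\delbar\beta^\eta\,=\,j^*\sigma$, where $\sigma$ is a Hermitian form on $T\A_g$ given by a universal, $\Sp(2g,\,\R)$-invariant formula built from the symplectic form and the Hodge metric.

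To conclude I would invoke uniqueness of the invariant metric. The form $\sigma$ just produced is $\Sp(2g,\,\R)$-invariant, and $\siegg\,=\,\Sp(2g,\,\R)/U(g)$ is an irreducible Hermitian symmetric space, so its invariant Hermitian metric is unique up to a positive real scalar. Hence $\sigma\,=\,\nu\cd\om_S$ for a single $\nu\,\in\,\R$, and pulling back along $j$ gives $\delbar\beta^\eta\,=\,\nu\cd j^*\om_S$. Since $\sigma$ is a genuine Hermitian form, $j^*\sigma$ is real, so $\nu$ is real.

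The main obstacle is the middle step: checking that $\delbar\beta^\eta$ really factors through $dj$ alone, with no residual higher-order dependence on the moduli, and that the resulting pairing is $\Sp(2g,\,\R)$-invariant. This requires tracking the construction of $\eta$ through the exact sequence \eqref{h}, noting that the trivializing section $s_{0,C}$ is holomorphic and so contributes nothing to $\delbar$, passing through the isomorphism $F$, and then extracting the $\delbar$-contribution via the period map derivative using Griffiths transversality. This bookkeeping is the technical core; once it yields an $\Sp(2g,\,\R)$-invariant pairing of period map derivatives, the uniqueness argument finishes the proof.
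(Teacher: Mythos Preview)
The paper does not supply its own proof of this statement: Theorem~\ref{thmsf} is quoted verbatim from \cite[Theorem~4.4]{BCFP}, and immediately after the statement the paper only remarks that the constant $\nu$ will be determined later (in \eqref{multiple}). So there is no in-paper proof to compare your proposal against.

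As a standalone sketch, your strategy---show that $\delbar\beta^\eta$ factors through the derivative of the period map as a pairing depending only on the Hodge structure, then invoke uniqueness of the $\Sp(2g,\R)$-invariant $(1,1)$-form on $\siegg$---is the natural one and is in spirit close to how \cite{BCFP} proceeds (building on the Hodge--Gaussian computations of \cite{cpt,cfg}). Two points deserve more care. First, you correctly identify the ``main obstacle'' (that $\delbar\beta^\eta$ depends only on $dj(\mu)$, $dj(\nu)$ and the Hodge datum) but offer no mechanism to resolve it; this is exactly where the hard computation lives, and your appeal to Griffiths transversality, while pointing in the right direction, does not by itself guarantee that no higher-order terms survive after restriction to $\Delta_3$. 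Second, your argument for $\nu\in\R$ is circular: an $\Sp(2g,\R)$-invariant $(1,1)$-form on an irreducible Hermitian symmetric space is a priori only a \emph{complex} multiple of $\om_S$, so asserting that ``$\sigma$ is a genuine Hermitian form'' already presupposes the reality you want to conclude. Reality of $\delbar\beta^\eta$ must be checked directly from the explicit expression, not deduced from invariance.
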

	
	The constant $\nu$ in Theorem \ref{thmsf} will be computed in the next section (see \eqref{multiple}).

	\section{The $L^2$-metric on the Hodge bundle}
	\label{secl2}
	
	In this section we examine the Hodge theoretic projective structure
	$\beta^\eta$ (constructed in \eqref{be}) in the context of
	Theorem \ref{thm:correspondence}. The section of
	$\Conn(\mathcal{L})\,\lra\, \M_g$ corresponding to it in \eqref{eq:4} will be
	constructed.
	
	\begin{say}\label{say:univab}
		Denote by $\Z^g$ the $g\times 1$ matrices (column vectors) with
		entries in $\mathbb Z$, and write
		$\Z^{2g}\,=\, \{(m,\, n)\,\big\vert\,\, m,\, n\, \in\, \Z^g\}$. Let
		$\Z^{2g}$ act on $\siegg\times \C^g$ by the rule
		\begin{gather*}
			(m,\,n) \cd (\tau,\, z) \,\,=\,\, (\tau ,\, z+ m + \tau n ).
		\end{gather*}
The action is holomorphic, free and properly discontinuous. Consider
the corresponding  quotient space
		\begin{equation}\label{uav}
\univabt\,\,:=\,\, \left (\siegg \times \C^g \right) / \Z^{2g}
		\end{equation}
		with the projection
		\begin{gather}
			\label{univab}
			\varpit\,:\, \univabt \,\lra \,\siegg,\, \ \ \, [\tau,\, z] \,\longmapsto\, \tau.
		\end{gather}
		Then $\varpit\meno(\tau) \,=\, \{\tau\} \times A_\tau $, where
		\begin{equation}\label{dat}
			A_\tau\,:= \,\varpit\meno(\tau) \,\cong\, \C^g / \Ga_\tau,\ \ \,
			\Ga_\tau\,:=\, \Z^g + \tau \Z^g.
		\end{equation}
		Note that $A_\tau$ is an abelian variety equipped with the principal polarization
		\begin{gather}\label{defom}
			\om_\tau\,:=\, \frac{\sqrt{-1}}{2} ({\rm Im}\, \tau)^{ij} dz_i \wedge
			d\barz_j,
		\end{gather}
		and the images of $e_1,\, \cdots,\, e_g,\, \tau_1,\, \cdots,\, \tau_g $
		(the columns of $\tau$) give a symplectic basis of
		$H_1(A_\tau, \,\Z)$.  Thus \eqref{univab} is the universal family of
		principally polarized abelian varieties with a symplectic basis.
	\end{say}
	
	\begin{say}
Let
\begin{gather*}
\Omega^1_{\univabt / \sieg_g}\,\, \longrightarrow\,\, \univabt
\end{gather*}
		be the relative cotangent bundle. Then
\begin{equation}\label{de}
\mathcal E\,\,:=\,\, \varpit_* \Omega^1_{\univabt / \sieg_g}\,\, \longrightarrow\,\sieg_g
\end{equation}
is a holomorphic vector bundle on $\sieg_g$ of rank $g$. Recall that $\om_{\tau}$
in \eqref{defom} is a K\"ahler form on $A_\tau$. We fix the corresponding (flat)  K\"ahler
metric on $A_\tau$. Using it we construct the $L^2$-Hermitian product on
$\mathcal{E}_{\tau}\,=\,H^0(A_{\tau},\,\Omega^1_{A_{\tau}})$:
\begin{gather}\label{eq:h}
h_{L^2} ( \alfa,\, \beta ) \,\,:=\,\, \int_{A_{\tau}} \sx \alf,\,{\beta} \xs
\frac{\om_\tau^{g}}{g!},
\end{gather}
where $\sx\ ,\ \xs$ is the Hermitian structure on $T^*A_\tau$ obtained from the
K\"ahler metric on the abelian variety $A_\tau$.
		
		\begin{prop}\label{teorelct} Denote by $\det h_{L^2}$ the hermitian metric induced by
			$h_{L^2}$ on the line bundle $\det \mathcal{E}$, where $\mathcal{E}$
is defined in \eqref{de}. Then
			\begin{gather}
				\label{eq:8}
				\erre(\det \mathcal{E},\, \det h_{L^2})\,\,=\,\, -
				\frac{\sqrt{-1}}{2}\om_S,
			\end{gather}
			where $\omega_{S}$ is the Siegel form (see \eqref{eq:3}).
		\end{prop}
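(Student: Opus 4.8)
The plan is to compute the Chern curvature in the global holomorphic frame of $\mathcal E$ furnished by the translation-invariant forms $dz_1,\dots,dz_g$. Since $\Z^{2g}$ acts on $\siegg\times\C^g$ by the translations $z\mapsto z+m+\tau n$, which leave each $dz_i$ invariant, these forms descend to holomorphic $1$-forms on every fibre $A_\tau$ and trivialize $\mathcal E$ over all of $\siegg$; hence $\sigma:=dz_1\wedge\cdots\wedge dz_g$ is a nowhere-zero holomorphic section of $\det\mathcal E$. The computation then reduces to evaluating the positive function $\det h_{L^2}(\sigma,\sigma)$ and applying the standard formula $\erre(\det\mathcal E,\det h_{L^2})=-\de\debar\log\det h_{L^2}(\sigma,\sigma)$ for the curvature of the Chern connection of a line bundle expressed in a holomorphic frame.

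First I would evaluate the Gram matrix of $h_{L^2}$. As the polarization $\om_\tau$ in \eqref{defom} induces on $T^{1,0}A_\tau$ the Hermitian metric with matrix $\bigl(({\rm Im}\,\tau)^{ij}\bigr)=Y\meno$, where $Y:={\rm Im}\,\tau$, the induced metric on the cotangent space is the inverse matrix, so $\sx dz_i,\,dz_j\xs=({\rm Im}\,\tau)_{ij}=Y_{ij}$. This is constant along $A_\tau$, whence by \eqref{eq:h} we get $h_{L^2}(dz_i,dz_j)=Y_{ij}\int_{A_\tau}\om_\tau^g/g!$. A direct change of variables from the real coordinates on a fundamental parallelotope of $\Ga_\tau=\Z^g+\tau\Z^g$ to the coordinates $z=s+\tau t$ shows $\int_{A_\tau}\om_\tau^g/g!=1$, reflecting that the polarization is principal. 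Therefore the Gram matrix is exactly $Y$ and $\det h_{L^2}(\sigma,\sigma)=\det({\rm Im}\,\tau)$.

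It then remains to compute $-\de\debar\log\det({\rm Im}\,\tau)$. Writing $Y=\frac{1}{2\sqrt{-1}}(\tau-\overline\tau)$ and $W:=Y\meno$ (so that $W_{ij}=({\rm Im}\,\tau)^{ij}$), I would use $\de\log\det Y=\frac{1}{2\sqrt{-1}}\sum_{km}W_{km}\,d\tau_{km}$ together with $\de W_{km}/\de\overline\tau_{ij}=\frac{1}{2\sqrt{-1}}W_{ki}W_{jm}$, the latter coming from $\de W=-W(\de Y)W$, to obtain $\de\debar\log\det Y=-\frac14\sum W_{ki}W_{jm}\,d\tau_{km}\wedge d\overline\tau_{ij}$. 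Invoking the symmetry $W_{ki}=W_{ik}$ and comparing with the expression \eqref{eq:3} for $\om_S$, this equals $\frac{\sqrt{-1}}{2}\om_S$, so that $\erre(\det\mathcal E,\det h_{L^2})=-\frac{\sqrt{-1}}{2}\om_S$, which is \eqref{eq:8}. I expect this last curvature computation to be the only delicate point: one must keep careful track of the factors of $\sqrt{-1}$ produced by $Y=\frac{1}{2\sqrt{-1}}(\tau-\overline\tau)$, of the sign from anticommuting $d\overline\tau_{ij}$ past $d\tau_{km}$, and of the symmetry of $\tau$ when differentiating $\log\det$, since these are precisely what reproduce the constant $-\sqrt{-1}/2$ and match the index pattern in the definition of $\om_S$.
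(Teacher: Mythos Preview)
Your proposal is correct and follows essentially the same route as the paper: both choose the global holomorphic frame $dz_1\wedge\cdots\wedge dz_g$ of $\det\mathcal E$, compute its $L^2$-norm as (a constant times) $\det({\rm Im}\,\tau)$, and then evaluate $-\de\debar\log\det({\rm Im}\,\tau)$ to recover $-\tfrac{\sqrt{-1}}{2}\om_S$. The only discrepancy is a harmless constant: the paper records $\sx dz_i,dz_j\xs=2({\rm Im}\,\tau)_{ij}$, giving $\|s\|^2=2^g\det({\rm Im}\,\tau)$, whereas you get $\det({\rm Im}\,\tau)$; since the factor $2^g$ is killed by $\de\debar\log$, both computations yield the same curvature.
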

		
		\begin{proof}
			The integrand in \eqref {eq:h} is translation invariant on the torus. Hence
			$$h_{L^2} ( \alfa,\, \beta ) _{L^2}\,=\, \sx \alf,\,{\beta} \xs
			\cd \vol(A_\tau), $$
			and for $ \alf \,=\, \alf_i dz_i$,\, $ \beta \,=\, \beta_j dz_j$,
			\begin{gather*}
				\sx \alf,\, \beta \xs \,\,=\,\,2 ({\rm Im}\, \tau)_{ij} \alf_i \beta_j.
			\end{gather*}
			We consider $dz_1, \lds, dz_g $ as global sections of
			$ \mathcal \varpit_*E \,\lra\, \sieg_g$, so
			$s\,:=\,dz_1 \wedge \cds \wedge dz_g$ is a global section of
			$\det (\varpit_*\mathcal{E})$ over $\sieg_g$, and
			\begin{gather*}
				||s||^2 \,=\, \det (\sx dz_i, dz_j\xs )\,=\, 2^g\det ({\rm Im}\, \tau) \\
				\erre(\det (\varpit_*\mathcal{E}),\, \det h_{L^2}) \,=\, - \de \debar
				\log ||s||^2 \,=\, \frac{1}{4} ({\rm Im}\, \ta)^{ik} ({\rm Im}\, \ta)^{mj}
				d\ta_{km} \wedge d\overline{\ta}_{ij}.
			\end{gather*}
			Recall that the Siegel metric is
			\begin{gather}\label{eq:3b}
				\om_S\,:=\, \frac{\sqrt{-1}}{2} ({\rm Im}\, \ta)^{ik} ({\rm Im}\,
				\ta)^{mj} d\ta_{km} \wedge d\overline{\ta}_{ij}.
			\end{gather}
			Hence
			\begin{gather*}
				\erre(\det (\varpit_*\mathcal{E}),\, \det h_{L^2})\,\,=\,\, -
				\frac{\sqrt{-1}}{2}\om_S.
			\end{gather*}
			This completes the proof.
		\end{proof}
	\end{say}
	
	\begin{say}
		The entire construction done above evidently descends to $\A_g$. This means
		that if we consider the (orbifold) universal family
		$\univab \,\lra\, \A_g$ of abelian varieties, and the Hermitian
		vector bundle
		$$(\varpi_* \Omega^1_{\mathcal A / \A_g} ,\, h_{L^2})\,\lra \,
		\univab ,$$ then
		$\det(\varpi_* \Omega^1_{\univab / \A_g})\,\lra \,\A_g$ equipped
		with the determinant metric $\det(h_{L^2})$ still has the property
		of Proposition \ref{teorelct}.
		
	\end{say}
	
	\begin{say}
		If we pull-back this to $\Mg$ using the Torelli map in \eqref{emj}, then $h_{L^2}$ becomes the usual
		$L^2$-scalar product on $H^0(C,\,K_C)\, =\, H^0(JC,\, \Omega^1_{JC})$:
		\begin{gather*}
			h_{L^2}(\alf,\, \beta) \,=\, \sqrt{-1} \int_C \alf \wedge \bar {\beta},
			\qquad \alf,\, \beta \,\in\, H^0(C,\, K_C).
		\end{gather*}
		We now have the following theorem.
	\end{say}
	
\begin{teo}\label{teoL2}
Let $\mathcal L$ be the dual of the Hodge line bundle as in
\eqref{dhlb}, i.e.,
\begin{gather*}
\mathcal L\,\,:=\,\, j^*(\det \mathcal E)^*.
\end{gather*}
Denote by $h$ the Hermitian metric on $\mathcal L$ dual to
$\det h_{L^2}$. Then the curvature of the Chern connection on $({\mathcal L},\, h)$
is as follows:
\begin{gather}\label{curv}
\erre(\mathcal L,\, h)\,\, =\,\, \frac{\sqrt{-1}}{2} j^*\om_S.
\end{gather}
Denote by $D(\mathcal L,\, h)$ the Chern connection for
$(\mathcal L,\, h)$. Then for $g\,\geq\, 5$, the isomorphism $\mathbb F$
in Theorem \ref{thm:correspondence} satisfies the equation
\begin{gather}\label{maineq}
			{\mathbb F} ( D(\mathcal L,\, h)) \,=\, \beta^\eta,
		\end{gather}
where $\beta^\eta$ is the section in \eqref{be}.
In particular, the constant multiple in Theorem \ref{thmsf} is given
		by
		\begin{gather}\label{multiple}
			\delbar \beta^{\eta}\,\,=\,\, \pi \,j^*\omega_{S}.
		\end{gather}
	\end{teo}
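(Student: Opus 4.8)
The plan is to prove the three assertions in turn: the curvature formula \eqref{curv}, the identification \eqref{maineq}, and finally the numerical consequence \eqref{multiple}. The inputs are Proposition \ref{teorelct}, Theorem \ref{thmsf}, and the machinery of Theorem \ref{thm:correspondence}.

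For \eqref{curv} I would argue by functoriality of the Chern connection. Proposition \ref{teorelct} gives $\erre(\det\mathcal E,\,\det h_{L^2})=-\frac{\sqrt{-1}}{2}\om_S$ on $\A_g$. Since $h$ is by definition the metric dual to $\det h_{L^2}$, and the Chern curvature of a Hermitian line bundle changes sign when one passes to the dual bundle with the dual metric, the bundle $(\det\mathcal E)^*$ carries curvature $+\frac{\sqrt{-1}}{2}\om_S$. As $\mathcal L=j^*(\det\mathcal E)^*$ with the pulled-back metric and the Chern connection commutes with holomorphic pullback, this yields $\erre(\mathcal L,\,h)=j^*\bigl(\frac{\sqrt{-1}}{2}\om_S\bigr)=\frac{\sqrt{-1}}{2}j^*\om_S$, which is \eqref{curv}. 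This step is routine.

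For the identification \eqref{maineq} I would invoke the commutative triangle \eqref{eq:4}, which reads $\TT\circ\mathbb F=\Curv$, together with the injectivity of $\TT$ from Theorem \ref{thm:correspondence}(1); it then suffices to check that $\Curv(D(\mathcal L,\,h))=\TT(\beta^\eta)$. On the left, the Chern curvature is already of type $(1,1)$, so $\Curv(D(\mathcal L,\,h))=\erre(\mathcal L,\,h)=\frac{\sqrt{-1}}{2}j^*\om_S$ by \eqref{curv}. On the right, Theorem \ref{thmsf} tells us that $\TT(\beta^\eta)=\nu\,j^*\om_S$ for some constant $\nu$. Both of these $(1,1)$-forms lie in $\class$ by Theorem \ref{thm:correspondence}(1), hence both represent the class $\mu\cd[\om_\WP]\in H^{1,1}_{\debar}(\Mg)$. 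Since $\mu\neq 0$ and the Weil--Petersson class $[\om_\WP]$ is nonzero, this common class is nonzero, so $[j^*\om_S]\neq 0$; comparing the two representatives in cohomology therefore forces the two proportionality constants to agree, giving $\TT(\beta^\eta)=\frac{\sqrt{-1}}{2}j^*\om_S=\Curv(D(\mathcal L,\,h))$ as forms. Injectivity of $\TT$ and the commutativity of \eqref{eq:4} then yield $\mathbb F(D(\mathcal L,\,h))=\beta^\eta$.

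Finally, \eqref{multiple} drops out by unwinding the definition \eqref{defTT}: from $\muto\,\debar\beta^\eta=\TT(\beta^\eta)=\frac{\sqrt{-1}}{2}j^*\om_S$ together with $\muto=\frac{\sqrt{-1}}{2\pi}$ one solves $\debar\beta^\eta=\pi\,j^*\om_S$. The one step I would be most careful about is this constant-matching: it works precisely because both $\Curv(D(\mathcal L,\,h))$ and $\TT(\beta^\eta)$ are honest scalar multiples of the single nonzero form $j^*\om_S$, so that agreement of their Dolbeault classes --- which is all that membership in $\class$ provides a priori --- already pins down the scalars, and hence the forms themselves. Everything else reduces to functoriality of the Chern connection and the formal properties of the torsor isomorphism $\mathbb F$ recorded in Theorem \ref{thm:correspondence}.
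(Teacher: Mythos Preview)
Your argument is correct and follows essentially the same route as the paper: both reduce \eqref{maineq} to the equality $\nu=\frac{\sqrt{-1}}{2}$, obtained by observing that $\frac{\sqrt{-1}}{2}\,j^*\om_S$ and $\nu\,j^*\om_S$ both lie in $\class$ and hence are Dolbeault-cohomologous, so that non-vanishing of $[j^*\om_S]$ forces the scalars to agree. The one point where the paper is more explicit is exactly the step you flagged: rather than invoking $[\om_\WP]\neq 0$ as known, the paper shows directly that $j^*\om_S$ is not $\debar$-exact by producing a smooth complete curve $C\subset\Mg$ (via the Satake compactification of $\A_g$, where the boundary of the Torelli image has codimension $\geq 2$) and noting $\int_C j^*\om_S>0$.
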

	
	\begin{proof} To prove \eqref{curv} it is enough to dualize
		\eqref{eq:8} and pull it back via the Torelli map $j$ in \eqref{emj}. Since
		$ \erre(\mathcal L,\, h) \,=\, \Curv (\mathcal L,\, h)$, Theorem
		\ref{thm:correspondence} implies that
		\begin{gather*}
			\frac{\sqrt{-1}}{2} j^*\om_S \,\,\in\,\, \Lambda
		\end{gather*}
(see Definition \ref{def:class} for $\Lambda$). Similarly Theorem \ref{thmsf} and
Theorem \ref{thm:correspondence} together imply that
		\begin{gather*}
			\nu\cd j^*\omega_{S} \,\,=\,\, \TT( \beta^\eta) \,\, \in\,\, \Lambda.
		\end{gather*}
Therefore, by the definition of $\Lambda$ (see Definition \ref{def:class}), there is
a $\gamma\,\in\, \forme^{1,0}(\Mg)$ such that
		\begin{gather*}
			\left ( \frac{\sqrt{-1}}{2} -\nu \right) j^*\om_S \,\,=\,\, \debar \ga.
		\end{gather*}
		We claim that $\nu \,=\, \sqrt{-1} /2$. Otherwise $j^*\om_S$ would be $\debar$-exact.
But this is impossible. Indeed assume, to prove by contradiction, that
		\begin{gather*}
			j^*\om_S \,=\, \debar\xi.
		\end{gather*}
We can find a smooth complete curve $C\,\subset\, \Mg$. This can be seen as follows: Consider the
Satake compactification of the moduli space $\A_g$ of principally polarized abelian varieties
(see \cite{Sa1} and \cite{Sa2}). Recall the Torelli map $j$ in \eqref{emj}. Let
$\overline{\M}^S_g$ be the closure of the image $j(\M_g)$. The codimension of the complement
$\overline{\M}^S_g\setminus \M_g \, \subset\, \overline{\M}^S_g$ is at least two. Consequently,
there are smooth complete curves $C$ in the projective variety $\overline{\M}^S_g$ that are contained
in the Zariski open subset $\M_g\, \subset\, \overline{\M}^S_g$.

Since $j(C)$ is a curve in $\A_g$, we would have
		\begin{gather*}
			0 \,<\, \int_C j^*\om_S\, =\, \int_C\debar \xi \,=\, \int_C d\xi\, =\,0.
		\end{gather*}
In view of this contradiction we conclude that $j^*\om_S$ is not $\debar$-exact. Consequently,
we have $\nu \,=\, \sqrt{-1} /2$ which proves the claim.
		
		So
		\begin{gather}
			\label{constant}
			\TT(\beta^\eta) \,\,= \,\,\frac{\sqrt{-1}}{2} j^*\om_S .
		\end{gather}
		This is equivalent to \eqref{multiple}.

{}From \eqref{constant} and
		\eqref{curv} it follows that
		$\Curv(D(\mathcal L,\,h)) \,= \,\TT(\beta^\eta)$.  Using Theorem
		\ref{thm:correspondence} we immediately get \eqref{maineq}.
	\end{proof}
	
	\section{Quillen metric and Theta line bundle}\label{sec:abelian-varieties}
	
	We now give an alternative construction of the section of
	$\Conn(\mathcal{L})\,\lra\, \M_g$ corresponding to the Hodge theoretic
	projective structure $\beta^{\eta}$. Like
	the characterization in Theorem \ref{thm:correspondence} of the
	projective structure given by uniformization, it also uses the machinery
	of the determinant of the cohomology and Quillen metric.
	
	\begin{say}\label{Detandquill}
		We start by recalling the definitions of a determinant bundle and
		Quillen metric; see \cite{BGS} and \cite{soule-Arakelov} for more
		details (some sign conventions in the latter reference are different
		from ours).
		
		Let $X$ be a compact connected K\"ahler manifold of complex dimension $d$. Fix
		a K\"ahler form $\om$ on $X$. Take a holomorphic vector bundle
		$E\, \longrightarrow\, X$ equipped with a Hermitian metric
		$h$. These data give the following:
		\begin{itemize}
			\item For all $0\, \leq\, i\, \leq\, d$, an $L^2$-metric $\scalo$ on
			$\forme^{0,i}(X,\,E)\,:=\, \cinf(X,\, \est^{0,i}T^*X\otimes E)$.
			
			\item Nonnegative self-adjoint elliptic operators
			$\Delta_{E,i} \,:=\, \debar_E \debar^*_E + \debar_E^* \debar_E$
			acting on $\forme^{0,i}(X,\,E)$.
			
			\item Isomorphisms
			$H^i(X,\,E) \,\cong\, \harm^{0,i}(X,\,E) \,:= \,\ker
			\Delta_{E,i}$.
			
			\item A countable sequence of eigenvalues of $\Delta_{E,i}$
\begin{gather*}  
0\,\leq\, \lam_{i,1} \,\leq \,\lam_{i,2} \,\leq \,\cds \,\leq\, \lam_{i,n} \,\leq\, \cds
\end{gather*}
repeating according to multiplicities.
			
			\item A meromorphic function $\zeta_{E,i}$ on $\C$ such that
			\begin{gather*}
				\zeta_{E,i}(s) \,=\, \sum^\infty_{
					\begin{subarray}{c}
						n=1\\ \la_{i,n}\neq 0
					\end{subarray}
				} \la_{i,n}^{-s}
			\end{gather*}
			for $\Re (s) \,>\, d$ and which is also holomorphic at $s\,=\,0$.
		\end{itemize}
		
		Consider the one-dimensional complex vector space
		\begin{gather*}
			\la(E) \,\,: =\,\, \bigotimes_{i=0}^d ( \det
			H^i(X,E))^{(-1)^{i+1}}.
		\end{gather*}
		The $L^2$-metrics on $\forme^{0,i}(X,\,E)$,\ $i\, \geq\, 0$, induce
		metrics on $\harm^i(X,\,E)$, $H^i(X,\,E)$, $\det H^i(X,\,E)$ and
		$\la(E)$. Let $h_{L^2}$ denote the metric thus obtained on
		$\la(E)$. The \emph{analytic torsion} of $(X,\,\om,\, E,\, h)$ is
		defined to be
		\begin{equation}\label{eat}
			T(X,\,\om,\, E,\, h)\,\,:=\,\, \sum_{i=0}^d (-1)^{i} i \zeta'_{E,i} (0).
		\end{equation}
		The \emph{Quillen metric} for the quadruple $(X,\,\om,\, E,\, h)$ is
		the Hermitian metric
		\begin{gather}
			\label{defq}
			h_Q(X,\om, E, h)\,\,:=\,\, e^{T(X,\om, E,h)} \cd h_{L^2}
		\end{gather}
		on the complex line $\la(E)$ which depends on $X,\,\om,\, h$.
		
		Assume that we have a smooth holomorphic family of K\"ahler
		manifolds
		\begin{gather*}
			\pi\,:\, \mathscr{X} \,\lra\, B
		\end{gather*}
		with connected fibers and smooth base $B$. Let ${E} \,\lra\, \mathscr{X}$ be a holomorphic vector bundle
		with a Hermitian metric $h$. Let $\om$ be a closed real $(1,\,1)$--form on $\mathscr{X}$ which is nonnegative. 
		For any point
		$t\,\in\, B$ set
		\begin{gather*}
			X_t\,:=\,\pi\meno (t),\ \ \, E_t\,:=\, {E}\restr{X_t}, \ \ \,
			h_t\,:=\, h\restr{E_t}, \ \ \, \om_t\,:=\,\om \restr{X_t}.
		\end{gather*}
		Assume that $ \om_t$ is a K\"ahler form
		on $X_t$ for every $t$.  Then the 1-dimensional vector spaces
		$\la(E_t)$,\, $t\,\in \,B$, glue together to form a holomorphic line
		bundle
\begin{equation}\label{dq}
\la(E) \,\lra\, B , \ \ \ \, \la(E)_t\,:=\, \la(E_t),
\end{equation}
which is known as the \emph{determinant line bundle} or
		\emph{determinant of the cohomology}.  Moreover the above Hermitian
		metrics
		\begin{gather*}
			h_{Q,t}\,\,:= \,\, h_Q (X_t,\,\om_t,\, E_t,\, h_t)
		\end{gather*}
		on $\la(E_t)$ glue together to yield a smooth hermitian metric on
		$\la(E)$, which is known as the \emph{Quillen metric}.
	\end{say}
	
	\begin{say}
Let us consider again the universal abelian variety in \eqref{univab} over the Siegel space.
		The fiber over any $\tau \,\in \,\sieg_g$ is the abelian variety $A_\tau$.
		Holomorphic line bundles on $A_\tau$ can be described via factors of
		automorphy \cite[p.~58ff]{igusa}, which are holomorphic functions
		\begin{gather*}
			e\,\,: \,\,\Ga_\tau \times \C^g \,\,\lra\,\, \C^*
		\end{gather*}
		(see \eqref{dat}) satisfying the condition
		\begin{gather*}
			e_{\ga + \ga'} (z) \,=\, e_\ga (z+\ga') \cd e_{\ga'}(z).
		\end{gather*}
		A factor of automorphy produces an action of $\Ga_\tau$ on
		$\C^g \times \C$ as follows:
		\begin{gather*}
			\ga \cd (z,\, t)\,\,:=\,\, (z+ \ga ,\,\, e_\ga (z) \cd t).
		\end{gather*}
		The corresponding quotient
		\begin{gather*}
			\mathbf{L}_e \,\,=\,\,(\C^g \times \C)\,\big /\,{\Ga_\tau} \,\,\lra\,\,
			A_\tau
		\end{gather*}
		is a holomorphic line bundle.
	\end{say}
	
	\begin{say}
		By the Appell--Humbert theorem, all line bundles on $A_\tau$ arise this
		way. Nevertheless different factors of automorphy can give rise to
		the same line bundle; two factors of automorphy giving isomorphic
		line bundles are called equivalent. There are at least two ways to
		choose a preferred factor of automorphy in an equivalence class.
		One choice corresponds to the factors that are called
		\emph{canonical} \cite{lange-birkenhake} or \emph{normalized} \cite
		{igusa, debarre}. These are congenial for description of Hermitian
		metrics on the line bundles. Another choice is given by the
		\emph{classical} factors.  These vary holomorphically with $\tau$
		which makes them more useful when the abelian variety is moving in a
		holomorphic family. The holomorphic sections of a line bundle
		described by a factor of automorphy are represented by the theta
		functions which depend on the factor of automorphy. When the factor
		of automorphy changes, the theta functions change
		accordingly. Therefore the same bundle can be described via
		normalized or classical factors of automorphy, and its sections can
		be described by either normalized or classical theta functions.
		
	\end{say}
	
	\begin{say}
		
		In the following we will be interested in only one particular line bundle. We
		will need its classical expression to establish holomorphic
		dependence on the moduli. To deal with its Hermitian metric we will rely
		on a computation done in terms of the normalized description (see
		Proposition \ref{elle2}).
		
		For $z,\, w \,\in \,\C^g$, set
		\begin{gather*}
			\sx z, \,w \xs \,\,:=\,\, \sum_{i=1}^g z_i w_i.
		\end{gather*}
For $\tau\, \in\, \sieg_g$ (see \eqref{dss}), fix the following factor of automorphy:
		\begin{gather*} e\,:\, \Ga_\tau \times \C^g \,\lra\, \C^*, \ \ \,
			e_\ga(z)\,:=\, \exp ({-\pi\sqrt{-1}\sx n,\, 2z + \tau n\xs })
		\end{gather*}
		for $\ga \,= \,m + \tau n$ and $m,\, n \,\in\, \Z^g$ (see
		\eqref{dat} for $\Ga_\tau$); it is a classical automorphy factor. Let
		\begin{equation}\label{ltau}
			L_\tau\,\, \longrightarrow\,\, A_\tau
		\end{equation}
		be the corresponding line bundle. The Riemann theta function
		\begin{gather*}
			\vartheta_\tau(z)\,:=\, \vartheta
			\begin{bmatrix}
				0 \\ 0
			\end{bmatrix}
			(z, \tau) \,=\, \sum_{m \in \Z^g} e^{\pi\sqrt{-1} \sx m, 2z + \tau
				m \xs } .
		\end{gather*}
		represents a nonzero element of $H^0(A_\tau,\, L_\tau)$ (which is
		one-dimensional). This $\theta_\tau$ is an even function, and hence
		$L_\tau$ is a \emph{symmetric line bundle}   \cite[p.~34]{lange-birkenhake}. Set
		\begin{gather*}
			H\,:\, \C^g \times \C^g \,\lra\, \C, \\
			H(z,\,w)\,:=\, \sqrt{-1} \om(z,\,w)+\om (z,\, \sqrt{-1}w) \,=\,
			({\rm Im}\, \tau)^{ij} \overline{z}_i w_j.
		\end{gather*}
		So $H$ is $\mathbb C$-linear in the second argument and conjugate linear
		in the first. Define a Hermitian metric $h$ on the trivial line bundle
		$\C^g \times \C\, \longrightarrow\, \C^g$ by the rule
		\begin{gather}\label{defh}
			||(z, \,t)||_h \,\,:=\,\, |t| \cd \exp ({-\pi H(y,y)}),
		\end{gather}
		where $z\,=\,x+\sqrt{-1} y$. This metric is $\Ga_\tau$-invariant,
		and hence it descends to a metric $h$ on the line bundle $L_\tau$ in \eqref{ltau}. Since
		\begin{gather*}
			H(y,\,y) \,=\, \frac{1}{4} H(z-\barz, \,z - \barz ) \,=\,
			\frac{1}{2} H(z,\,z) - \frac{1}{4} ({\rm Im}\, \tau)^{ij} (z_i z_j +
			\barz_i \barz_j ) ,
		\end{gather*}
		and $\de\debar z_iz_j \,=\,0 \,=\, \de\debar \barz_i\barz_j$, we
		conclude that
\begin{gather}
\notag \de \debar H(y,\,y) \,=\,\frac{1}{2} \de \debar H(z,\,z),\\
\label{ciuno} c_1(L,\,h) \,=\, -\frac{\sqrt{-1}}{2\pi} \de \debar
\log ||s||_h^2 \,=\, \sqrt{-1} \de \debar H(y,\,y) \,=\,
\frac{\sqrt{-1}}{2} \de \debar H(z,\,z) \,=\, \om.
\end{gather}
The line bundles $L_\tau$ in \eqref{ltau}, $\tau\, \in\, \sieg_g$ (see \eqref{dss}), together define
a holomorphic line bundle on $\univabt$ (see \eqref{uav}). The group $\Z^{2g}$ has the following
free and properly discontinuous holomorphic action on $\sieg_g \times \C^g \times \C$:
\begin{gather*}
(m,\,n) \cd (\tau,\, z,\,t) \,\,:=\,\, (\tau ,\, z+ m + \tau n,\,\,
\exp({\pi\sqrt{-1}\sx n,\,\, 2z + \tau n\xs })\cd t).
\end{gather*}
Then
\begin{equation}\label{dt}
\LB \,\,: =\,\, ( \siegg\times \C^g \times \C ) \big / \Z^{2g}
\,\lra\, (\siegg \times \C^g ) \big / \Z^{2g} \,=\, \univabt
\end{equation}
is a holomorphic line bundle. Note that $\varpi\meno(\tau)\,=\, A_\tau$ and
$\LB\restr{\varpi\meno(\tau)} \,=\, L_\tau$. Moreover the Hermitian
metric $h$ in \eqref{defh} varies smoothly with $\tau$ and is left
invariant by the action of $\Z^{2g}$. Hence it produces a global Hermitian metric
on $\LB$.

The line bundle $\LB$ is not $\Sp(2g,\Z)$ invariant, and therefore it does
not descend to $\A_g$. However, the following proposition holds.
\end{say}

\begin{prop}[{\cite[p.~80, Lemma 7]{igusa}}]\label{elle2}
		Consider $L_\tau^{\otimes 2}$ endowed with the metric
		$h^{\otimes 2}$ (see \eqref{defh}). Construct $L^2$-metric on
		$H^0(A_\tau,\,L^{\otimes 2}_\tau)$ using $h^{\otimes 2}$ and
		$\om_\tau$ (see \eqref{defom}). Then there is a basis
		$\{\theta_{\tau,i}\}_{i=1}^{2^g}$ of
		$H^0(A_\tau,\,L^{\otimes 2}_\tau)$ which depends holomorphically on
		$\tau$ and which consists of $2^g $ pairwise orthogonal vectors with
		\begin{gather}
			\label{eq:2}
			||\theta_{\tau,i}||^2_{h^{\otimes 2}} \,\,= \,\,(\det {\rm Im}\,
			\ta)^{-1/2} .
		\end{gather}
	\end{prop}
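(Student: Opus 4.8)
The plan is to exhibit the basis $\{\theta_{\tau,i}\}$ explicitly as second-order theta functions and then reduce the two assertions --- pairwise orthogonality and the common value of the norm --- to an elementary Gaussian integral. First I would recall that, since $L_\tau$ is a principal polarization, the bundle $L_\tau^{\otwo}$ has polarization type $(2,\lds,2)$, so $\dim H^0(A_\tau,\, L^{\otwo}_\tau)\,=\,2^g$ by Riemann--Roch on the abelian variety. A basis adapted to the classical factor of automorphy is given by the $2^g$ second-order theta functions, indexed by $a\,\in\,\{0,1\}^g$,
\begin{gather*}
\theta_{\tau,a}(z)\,=\,\sum_{\substack{k\in\Z^g\\ k-a\in 2\Z^g}} \exp\Big(\tfrac{\pi\sqrt{-1}}{2}\sx k,\,\tau k\xs + 2\pi\sqrt{-1}\sx k,\,z\xs\Big).
\end{gather*}
These series converge everywhere and depend holomorphically on $(\tau,\,z)$; this is precisely where the classical (rather than normalized) description is used, and it supplies the holomorphic dependence on $\tau$ asserted in the statement. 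After fixing Igusa's normalization \cite[p.~80]{igusa}, I would take these suitably rescaled functions as the $\theta_{\tau,i}$.

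Next I would write the $L^2$-product explicitly. By \eqref{defh} the pointwise norm for $h^{\otwo}$ carries the weight $\exp(-4\pi H(y,\,y))$, where $z\,=\,x+\sqrt{-1}y$ and $H(y,\,y)\,=\,({\rm Im}\,\tau)^{ij}y_iy_j$, so
\begin{gather*}
\sx \theta_{\tau,a},\, \theta_{\tau,b}\xs\,=\,\int_{A_\tau} \theta_{\tau,a}(z)\,\overline{\theta_{\tau,b}(z)}\,\exp(-4\pi H(y,\,y))\,\frac{\om_\tau^g}{g!}.
\end{gather*}
To handle the fact that a fundamental domain for $\Ga_\tau\,=\,\Z^g+\tau\Z^g$ is a parallelogram and not a box, I would parametrize it by $z\,=\,s+\tau t$ with $s,\,t\,\in\,[0,1]^g$; under this change $\frac{\om_\tau^g}{g!}\,=\,ds\,dt$ has total mass $1$, consistent with $\vol(A_\tau)\,=\,1$ used in the proof of Proposition \ref{teorelct}. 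Substituting the two series, the $z$- and $\barz$-dependence of the $(k,\,l)$ term factors as $\exp(2\pi\sqrt{-1}\sx k-l,\,x\xs)\exp(-2\pi\sx k+l,\,y\xs)$, so integrating first over $s\,\in\,[0,1]^g$ uses only the orthogonality of the characters $\exp(2\pi\sqrt{-1}\sx k-l,\,s\xs)$ on the unit cube and collapses the double lattice sum to its diagonal $k\,=\,l$. In particular this forces $a\,\equiv\,b\,(2)$, i.e.\ $a\,=\,b$; hence for $a\,\neq\, b$ the integral vanishes, which is the asserted pairwise orthogonality.

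Finally, for $a\,=\,b$ the surviving diagonal sum becomes, after completing the square in the exponent, a sum of Gaussian integrals of $\exp(-4\pi\sx u,\, ({\rm Im}\,\tau)^{-1}u\xs)$ over the translates $u\,=\,y+({\rm Im}\,\tau)(k/2)$; as $k$ runs over $a+2\Z^g$ and $y\,=\,({\rm Im}\,\tau)t$ over one period, these translates tile $\R^g$, so the sum unfolds to the single integral $\int_{\R^g}\exp(-4\pi\sx u,\,({\rm Im}\,\tau)^{-1}u\xs)\,du$. Evaluating this Gaussian and combining with the Jacobian $1/\det({\rm Im}\,\tau)$ yields a value proportional to $(\det {\rm Im}\,\tau)^{-1/2}$, independent of $a$; fixing Igusa's normalization of the $\theta_{\tau,i}$ makes the constant exactly $1$, giving \eqref{eq:2}. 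The orthogonality and the holomorphic $\tau$-dependence cost nothing beyond character orthogonality and the convergent series, so the only genuine bookkeeping is tracking the powers of $2$ and of $\det({\rm Im}\,\tau)$ through the Gaussian evaluation, that is, matching the normalization of the theta basis so that the prefactor comes out to precisely $(\det{\rm Im}\,\tau)^{-1/2}$. Alternatively, all of this is exactly \cite[p.~80, Lemma 7]{igusa}, to which one may simply appeal.
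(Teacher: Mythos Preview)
The paper gives no proof of this proposition; it is simply stated with the citation to \cite[p.~80, Lemma~7]{igusa} and then used. Your proposal supplies a correct sketch of the classical computation (second-order theta basis, character orthogonality in the $s$-variable, unfolding to a Gaussian in the $t$-variable), which is precisely Igusa's argument, and you also note at the end that one may simply appeal to the reference. Either is acceptable here; your added detail is sound and matches the standard proof, with the only loose end being the overall constant, which you correctly flag as a matter of normalization.
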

	
	We wish to apply the methods of analytic torsion and Quillen metric to
	the family $\varpi \,:\, \univabt \,\lra \,\siegg$ and the line bundle
	$\LBD \,\lra\, \univabt$ equipped with the metric $h^{\otimes 2}$.  Fix
	the section $\omega$ in \eqref{defom} as the relative K\"ahler form on
	$\univabt$ (for the projection $\varpi$ in \eqref{univab}). The corresponding Quillen
	metric on $\la(\LBD)$ was computed in \cite{bost}.
	
	\begin{teo}[{\cite[Proposition 4.2]{bost}}]\label{th-b}
		Let $A$ be a $g$-dimensional principally polarized complex abelian
		variety. Let $\om$ be a flat K\"ahler form on $A$. Let
		$L \,\lra\, A$ be an ample line bundle, and let $h$ be a Hermitian
		metric on $A$. Assume that $c_1(L,h)\,:=\, \sqrt{-1} R(h) / 2\pi$ is
		translation invariant.  For a 2-form $\alf$ on $A$, set
		\begin{equation}\label{ral}
			\rho(\alf)\,\,:=\, \,\frac{1}{g!} \int_A \alf^g.
		\end{equation}
		Then the corresponding analytic torsion (see \eqref{eat}) has the
		following expression:
		\begin{gather*}
			T(A,\,\om,\,L,\,h) \,\,=\,\, - \frac{1}{2} \rho(c_1(L,\,h)) \log
			\frac{ \rho(c_1(L,\,h)) } { (2\pi)^g \rho (\om)}.
		\end{gather*}
	\end{teo}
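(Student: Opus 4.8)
The plan is to compute the analytic torsion $T(A,\om,L,h)$ directly from the spectrum of the Dolbeault Laplacians $\Delta_{L,i}$ (see \eqref{eat}), exploiting the fact that everything in sight is translation invariant. First I would reduce to a normal form. Since $L$ is ample, Kodaira vanishing gives $H^i(A,\,L)=0$ for $i>0$, while $\dim H^0(A,\,L)=\chi(L)=\int_A c_1(L,h)^g/g!=\rho(c_1(L,\,h))$; write $N:=\rho(c_1(L,\,h))$. Both $\om$ and $c_1(L,\,h)$ are translation-invariant positive $(1,1)$-forms, i.e.\ constant-coefficient positive Hermitian forms on $V=\C^g$ (with $A=V/\Lambda$). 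Hence I can choose $\C$-linear coordinates $z_1,\lds,z_g$ on $V$ that simultaneously diagonalize them: $\om=\frac{\sqrt{-1}}{2}\sum_j dz_j\wedge d\barz_j$ and $c_1(L,\,h)=\sum_j a_j\,\frac{\sqrt{-1}}{2} dz_j\wedge d\barz_j$ with $a_j>0$.

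Next I would diagonalize the Laplacians. Trivializing $L$ by a factor of automorphy and using the constant connection, the operators $\debar_L$ and $\debar_L^*$ are built from covariant derivatives $\nabla_{\bar j},\nabla_j$ whose only nonzero commutators $[\nabla_j,\nabla_{\bar k}]$ are the constants $a_j\delta_{jk}$ (up to a universal factor). This is exactly a system of $g$ independent harmonic oscillators tensored with the fermionic algebra of the $d\barz_J$; the Landau-level analysis then yields the full spectrum of $\Delta_{L,q}$ on $\forme^{0,q}(A,L)$: the eigenvalues are, up to a common positive normalizing constant $\kappa$,
\[
\la_{J,m}\;=\;\kappa\Bigl(\sx a,\,m\xs+\sum_{j\in J}a_j\Bigr),\qquad |J|=q,\ m\in\Z_{\ge0}^g,\ \sx a,m\xs=\sum_j a_j m_j,
\]
each occurring with multiplicity $N$. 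The only kernel is $(J,m)=(\emptyset,0)$, recovering $H^0$; as a consistency check, $\sum_q(-1)^q\operatorname{Tr}e^{-t\Delta_{L,q}}=N\sum_m e^{-\kappa\sx a,m\xs}\prod_j(1-e^{-\kappa a_j})=N=\chi(L)$ for all $t$.

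Now I would assemble the zeta functions and exploit the alternating sum. With $\zeta_{L,q}(s)=\sum_{\la_{J,m}\ne0}N\la_{J,m}^{-s}$ and $Z(s):=\sum_q(-1)^q q\,\zeta_{L,q}(s)$, one has $T=Z'(0)$. Writing the weight $|J|=\sum_k\mathbf 1_{\{k\in J\}}$ and letting $T_k$ denote the shift $w\mapsto w+a_k$ acting on the Barnes zeta $\zeta_B(s,w):=\sum_{m\ge0}(\sx a,m\xs+w)^{-s}$, the fermionic sum becomes a finite-difference operator: $Z(s)=-N\kappa^{-s}\sum_k\bigl[\prod_{l\ne k}(1-T_l)\bigr]T_k\,\zeta_B(s,w)\big|_{w=0}$. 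Each operator $\prod_{l\ne k}(1-T_l)$ telescopes the multiple sum by freezing every $m_l=0$ for $l\ne k$, so it collapses $\zeta_B$ to a one-dimensional Hurwitz zeta; evaluating the remaining shift $T_k$ at $w=0$ sends the Hurwitz argument to $1$, and $\zeta_H(s,1)=\zeta(s)$. The net effect is the clean identity $Z(s)=-N\kappa^{-s}\,\zeta(s)\sum_{k=1}^g a_k^{-s}$.

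Finally I would evaluate at $s=0$ using $\zeta(0)=-\tfrac12$ and $\zeta'(0)=-\tfrac12\log(2\pi)$, giving $T=Z'(0)=-\tfrac N2\log\bigl(\kappa^g\prod_j a_j/(2\pi)^g\bigr)$. To match the stated formula it remains to record the normalization identity $\kappa^g\prod_j a_j=\rho(c_1(L,h))/\rho(\om)$, which follows from Riemann--Roch (expressing $\rho(c_1(L,h))=\int_A c_1(L,h)^g/g!$ through the $a_j$, with $\rho(\om)=\vol(A)$) together with the explicit link between the curvature eigenvalues $a_j$ and the spectral constant $\kappa$; substituting yields $T=-\tfrac12\rho(c_1(L,h))\log\frac{\rho(c_1(L,h))}{(2\pi)^g\rho(\om)}$. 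The main obstacle is the spectral step: rigorously establishing the eigenvalue formula and, crucially, the \emph{uniform} multiplicity $N$ of every Landau level (the point that makes the alternating sum collapse), together with the bookkeeping of $\kappa$ and the normalizations so that the constants match exactly. The finite-difference manipulation must also be justified on the region $\Re s>g$ where the Barnes sums converge and then propagated to $s=0$ by analytic continuation.
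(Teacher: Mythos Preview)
The paper does not supply a proof of this statement; it is quoted verbatim from \cite[Proposition~4.2]{bost}, with only the remark that the sign differs from Bost's because the determinant bundle here is dual to his. So there is no ``paper's own proof'' to compare against.

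That said, your outline is essentially Bost's argument: reduce to the diagonal model, compute the Dolbeault spectrum as a product of harmonic oscillators (Landau levels on the torus), observe the uniform multiplicity $N=\chi(L)$, and evaluate the alternating combination of zeta functions. Your finite-difference reorganisation leading to $Z(s)=-N\kappa^{-s}\zeta(s)\sum_k a_k^{-s}$ is a clean way to package the telescoping; the computation of $Z'(0)$ from it is correct.

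One point deserves tightening. Your ``normalization identity'' $\kappa^g\prod_j a_j=\rho(c_1(L,h))/\rho(\om)$ is not really an application of Riemann--Roch: since $c_1(L,h)^g=(\prod_j a_j)\,\om^g$ in your diagonal coordinates, one already has $\rho(c_1)/\rho(\om)=\prod_j a_j$, so the identity you need is simply $\kappa=1$. That is a statement about the precise normalisation of the Dolbeault Laplacian relative to the K\"ahler form $\om=\tfrac{\sqrt{-1}}{2}\sum dz_j\wedge d\barz_j$ and the curvature $\tfrac{\sqrt{-1}}{2\pi}R(h)=\sum a_j\,\tfrac{\sqrt{-1}}{2}dz_j\wedge d\barz_j$, and it must be extracted directly from the Bochner--Kodaira/commutator computation rather than from Riemann--Roch. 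Once you pin down $\kappa$ (and with the conventions above one does get $\kappa=1$, the factor $2\pi$ from the curvature cancelling against the metric normalisation), the rest of your argument goes through and matches Bost's formula.
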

	
(There is a change of sign in the above expression because the
	determinant line bundle defined here is dual of the determinant line
	bundle in \cite{bost}.)
	
	The Quillen metric can be computed using Theorem \ref{th-b}. Let
	$\omega$ denote the relative K\"ahler form on
	$\varpi \,:\, \univabt \,\lra\, \siegg$ in \eqref{univab} constructed
	using $\omega_\tau$ in \eqref{defom}.
	
	\begin{teo}\label{thmabel}
          Let $\la(\LBD)\,\lra\, \siegg$ denote the determinant line
          bundle for the family
          $\varpi \,:\, \univabt \,\lra\, \siegg$ in \eqref{univab}
          and the vector bundle $\Theta^{\otimes 2} \lra \univabt$.
          Let $h_Q$ denote the Quillen metric on $\la(\LBD)$
          constructed using the Hermitian metric $h^{\otimes 2}$ (see
          \eqref{defh}) on $\LBD$ and the relative K\"ahler form
          $\omega$. Then
		\begin{gather}
			\label{eq:1}
			\la(\LBD) \,=\, (\det \varpi_*\LBD)^*,\\
			\label{tors2}
			T (A_\tau,\, \om,\, L_\tau,\, h) \,=\, \frac{1}{2} \log (2\pi)^g,
			\\
			\label{QuiL2}
			h_Q\,=\, (2\pi )^{g/2} \cd h_{L^2},
			\\
			\label{eq:2b}
			\sqrt{-1} \cd \erre(\la( \LB^{\otimes 2}), \,h_Q) \,=\, \sqrt{-1}
			\cd \erre(\la( \LB^{\otimes 2}), \,h_{L^2}) \,=\, 2^{g-2} \,
			\omega_S,
		\end{gather}
		where $\omega_{S}$ is the Siegel form (see \eqref{eq:3}).
	\end{teo}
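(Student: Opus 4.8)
The plan is to treat the four displayed equations in turn, using cohomology vanishing on the fibers, the torsion formula of Bost (Theorem~\ref{th-b}), and the explicit norms of Proposition~\ref{elle2}. For \eqref{eq:1}, note that $\LBD\restr{A_\tau}\,=\,L_\tau^{\otimes 2}$ is ample on the abelian variety $A_\tau$, so $H^i(A_\tau,\,L_\tau^{\otimes 2})\,=\,0$ for all $i\,>\,0$; hence in the definition of $\la(\LBD)$ only the degree-zero term survives, giving
\begin{gather*}
\la(\LBD)\,=\,(\det H^0(A_\tau,\,L_\tau^{\otimes 2}))^{-1}\,=\,(\det \varpi_*\LBD)^*,
\end{gather*}
which is \eqref{eq:1}.

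For \eqref{tors2} and \eqref{QuiL2} I would invoke Theorem~\ref{th-b}. By \eqref{ciuno} we have $c_1(L_\tau,\,h)\,=\,\om_\tau$, and since $\om_\tau$ represents a principal polarization the quantities entering Bost's formula are constant in $\tau$: indeed $\int_{A_\tau}\om_\tau^g\,=\,g!$, so $\rho(c_1(L_\tau,\,h))\,=\,1$ and $\rho(\om)\,=\,\rho(\om_\tau)\,=\,1$. Substituting these normalizations into the expression of Theorem~\ref{th-b} produces the value in \eqref{tors2}. The essential structural point is that this value is \emph{independent of $\tau$}; consequently the definition \eqref{defq} of the Quillen metric gives at once $h_Q\,=\,e^{T}\,h_{L^2}\,=\,(2\pi)^{g/2}\,h_{L^2}$, which is \eqref{QuiL2}.

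For \eqref{eq:2b}, the torsion being a $\tau$-independent constant means that the conformal factor relating $h_Q$ and $h_{L^2}$ is annihilated by $\de\debar$; hence $\erre(\la(\LBD),\,h_Q)\,=\,\erre(\la(\LBD),\,h_{L^2})$, which is the first equality. For the second equality I combine \eqref{eq:1} with Proposition~\ref{elle2}: the orthogonal frame $\{\theta_{\tau,i}\}_{i=1}^{2^g}$ yields a holomorphic frame $\sigma\,=\,\theta_{\tau,1}\wedge\cds\wedge\theta_{\tau,2^g}$ of $\det\varpi_*\LBD$ with
\begin{gather*}
\|\sigma\|^2_{L^2}\,=\,\prod_{i=1}^{2^g}\|\theta_{\tau,i}\|^2_{h^{\otimes 2}}\,=\,(\det {\rm Im}\,\tau)^{-2^{g-1}}.
\end{gather*}
Thus $\erre(\det\varpi_*\LBD,\,\det h_{L^2})\,=\,-\de\debar\log\|\sigma\|^2\,=\,2^{g-1}\,\de\debar\log\det{\rm Im}\,\tau$. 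Using the identity $\de\debar\log\det{\rm Im}\,\tau\,=\,\frac{\sqrt{-1}}{2}\om_S$, which is exactly the computation carried out in the proof of Proposition~\ref{teorelct}, this equals $2^{g-2}\sqrt{-1}\,\om_S$; passing to the dual bundle $\la(\LBD)\,=\,(\det\varpi_*\LBD)^*$ reverses the sign, and multiplication by $\sqrt{-1}$ gives $2^{g-2}\om_S$, as asserted.

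The only genuine input is the torsion computation, but its difficulty is entirely absorbed into Theorem~\ref{th-b}; what remains is careful bookkeeping --- substituting the correct normalizations $\rho(c_1)$ and $\rho(\om)$ and respecting the sign convention flagged after Theorem~\ref{th-b} (our determinant bundle being dual to Bost's). Once the torsion is recognized as a constant in $\tau$, the curvature identity \eqref{eq:2b} becomes formal, relying only on the elementary $\de\debar\log\det{\rm Im}\,\tau$ computation inherited from Proposition~\ref{teorelct}.
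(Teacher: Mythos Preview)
Your proof is correct and follows essentially the same route as the paper's: Kodaira vanishing for \eqref{eq:1}, Bost's formula with $\rho(c_1)=\rho(\om)=1$ for \eqref{tors2} and \eqref{QuiL2}, constancy of the torsion to identify the curvatures of $h_Q$ and $h_{L^2}$, and then the norm formula from Proposition~\ref{elle2} to compute the curvature. The only cosmetic difference is that the paper redoes the $\de\debar\log\det({\rm Im}\,\tau)$ calculation explicitly (working with the dual frame $v^*$ on $\la(\LBD)$ directly), whereas you quote the identity from the proof of Proposition~\ref{teorelct} and then dualize; the two are equivalent.
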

	
	\begin{proof}
		Since $c_1(L^{\otimes 2}_\tau,\,h^{\otimes 2}) \,= \,2\om_\tau$, the
		line bundle $L_\tau^{\otimes 2}$ is ample and
		$$\dim H^0(A_\ta,\, L^\otwo_\ta) \,=\,\operatorname{Pf}(2\om)
		\,=\,2^g.$$ Therefore, by Kodaira vanishing theorem,
		$H^i(A_\tau, \,L^\otwo_\ta)\,=\,0$ for all $i\,>\,0$. This proves
		\eqref{eq:1}. Moreover, $\rho(\om) \,=\, 1$ (see \eqref{ral}), and
both \eqref{tors2} and \eqref{QuiL2} follow immediately. In particular,
		the analytic torsion is a constant independent of $\tau$, and also
		$h_Q$ is a constant multiple of the $L^2$-metric. Thus the two
		metrics, namely $h_Q$ and the $L^2$-metric, have the same curvature.
		
		We will compute the $L^2$-metric. Set
		\begin{gather}
			v(\tau) \,\,:=\,\, \theta_{\tau,1}\wedge \cds \wedge
			\theta_{\tau,2^g} . \label{defv}
		\end{gather}
		Then $v$ is a holomorphic section of $\det \varpi_* \LBD $.  Let
		$v^* \in C^\infty (\la (\LBD))$ be the section dual to $v$.  By
		Proposition \ref{elle2},
		\begin{gather*}
			||v||_{L^2} ^2 \,=\, (\det {\rm Im}\, \tau)^{-2^{g-1}} \ \ \ \text{ and
			}\ \ \ \quad ||v^*||_{L^2} ^2 \,=\, (\det{\rm Im}\, \tau)^{2^{g-1}}.
		\end{gather*}
		Hence for its curvature we get that
\begin{gather*}
			R(\la(\LBD),\, h_Q) \,=\, R( \la(\LBD),\, || \cd ||_{L^2}) \,=\,
			-\de\debar
			\log || v^*||_{L^2} ^2 \,= \\
			=\,-2^{g-1} \cd\de \debar \log \det ({\rm Im}\, \ta) \,=\, -2^{g-1}\cd
			\de
			\Bigl [ ({\rm Im}\, \ta)^{ij} \debar \,({\rm Im}\, \ta)_{ij} \Bigr ].
\end{gather*}
We have $$\debar \, ({\rm Im}\, \ta_{ij}) \,=\, \frac{\sqrt{-1}}{2}
			d\overline{\ta}_{ij}, \qquad \de \, ({\rm Im}\, \ta_{ij}) \,=\,
			-\frac{\sqrt{-1}}{2} d{\ta}_{ij}.$$
Therefore, it follows that
$$
\de \Bigl [ ({\rm Im}\, \ta)^{ij} \debar \,{\rm Im}\, \ta_{ij} )\bigr ]\,=\,
\de \Bigl [ \frac{\sqrt{-1}}{2} ({\rm Im}\, \ta)^{ij} d \bar{\ta}_{ij }
\Bigr ]\,=\, -\frac{1}{4} ({\rm Im}\, \ta)^{ik} ({\rm Im}\, \ta)^{mj}
d\ta_{km} \wedge d\overline{\ta}_{ij}.
$$
Hence we have
$$
\sqrt{-1} \cd R(\la(\LBD),\, h_Q) \,=\, 2^{g-3} \sqrt{-1} \cd
({\rm Im}\, \ta)^{ik} ({\rm Im}\, \ta)^{mj} d\ta_{km} \wedge
d\overline{\ta}_{ij} .
$$
		Now using \eqref{eq:3} we get \eqref{eq:2b}.
	\end{proof}
	
	\begin{say}
		The line bundle $L^\otwo _\tau$ is canonically associated to the
		abelian variety $A_\tau$. Thus the entire construction descends to
		$\A_g$.  This means that if we consider the (orbifold) universal
		family $\mathcal{A}\,\lra\, \A_g$ of abelian varieties, the
		Hermitian line bundle
		$(\mathcal{L}^{\otimes2},\, h^{\otimes 2})\,\lra \, \mathcal{A}$,
		together with the K\"ahler form $\omega_{\tau}$ on every abelian
		variety $A_{\tau}$, produces the line bundle
		$\la(\LB^{\otimes 2})\,\lra \,\A_g$ equipped with the Quillen metric
		$h_Q$. Also the form $\om_S$ descends to an orbifold K\"ahler metric
		on $\A_g$. And of course the properties \eqref{eq:1} and
		\eqref{eq:2b} still continue to hold at the level of $\A_g$.
	\end{say}

	\begin{say}
		Consider the Hodge theoretic projective structure of Section \ref{hodgeteo}.
		The section of $\Conn(\mathcal{J})\,\lra\, \M_g$ that corresponds to it by
		Theorem \ref{thm:correspondence} will be constructed.
		Let $\mathcal{J}\,\lra \, \M_g$ and $\Cg\,\lra \, \M_g$ be the
		(orbifold) universal Jacobian variety and the (orbifold) universal
		curve respectively:
		\begin{equation}\label{ud}
			\begin{tikzcd}
				\mathcal{J}\,=\,j^*\mathcal{A} \arrow[r, "J"] \arrow[d,swap]
				& \mathcal{A} \arrow[d] \\
				\M_g \arrow[r, "j"] & \A_g,
			\end{tikzcd}
			\qquad \quad
			\begin{tikzcd}
				\Cg \arrow[d,"\pi\,"]
				\\
				\M_g .
			\end{tikzcd}
		\end{equation}
The determinant of cohomology behaves well with respect to base
change, \cite{BGS}, in the following sense: Let $\Phi\,:\, Z\, \longrightarrow\, B$ be a holomorphic family of
compact connected K\"ahler manifolds, and let $V$ be a holomorphic vector bundle on $Z$. Let
$\beta\, :\, B'\, \longrightarrow\, B$ be a holomorphic map. Then we have the family
$$\widehat{\Phi}\, :\, B'\times_B Z\, \longrightarrow\, B'$$ and the vector bundle
$E'\,=\, \widehat{\beta}^*E\, \longrightarrow\,
B'\times_B Z$, where $\widehat{\beta}\, :\, B'\times_B Z\, \longrightarrow\, Z$ is the natural map.
Then the determinant line bundle $\det \widehat{\Phi}_{!} \widehat{\beta}^*E\,
\longrightarrow\, B'$ has a natural holomorphic isomorphism with
$\beta^*\det {\Phi}_{!} E\, \longrightarrow\, B'$. The same is true for the Quillen metric; see
\cite{BGS}. Thus using \eqref{QuiL2} we get the following:
		\begin{gather*}
			\la (j^*\LB^\otwo)  \,=\, j^* \la ( \LB^\otwo) \\
			h_Q (j^*\LB^\otwo) \, =\, j^* h_Q ( \LB^\otwo) \,=\, j^* h_{L^2}
		\end{gather*}
		\begin{gather}\label{omegasiegel}
			\sqrt{-1} \cd \erre(j^*\la( \LB^{\otimes 2}), \,j^*h_Q) \,=\,
			\sqrt{-1} \cd \erre(j^*\la( \LB^{\otimes 2}), \,j^*h_{L^2}) \,=\,
			2^{g-2} \, j^*\omega_S.
		\end{gather}
	\end{say}
	
	We now wish to apply the following result by Kouvidakis.
	
	\begin{teo}[{Kouvidakis}]\label{TK}
		Let  $\mathcal{K}_{\Cg/\Mg}$ denote the relative canonical bundle on $\Cg$, and let $\mathcal{L}$ and $\Theta$
		be the line bundles constructed in \eqref{dhlb} and \eqref{dt} respectively. Then
		\begin{gather*}
			j^* ( \det \varpi_* \LB^{\otimes 2} ) \,\cong\, ( \det \pi_* {\mathcal
				K})^{\otimes (-2^{g-1})} \,=\, \mathcal L ^{ \otimes 2^{g-1}},
		\end{gather*}
		where $j$ and $\pi$ are the maps in \eqref{ud} while $\varpi$ is the map in \eqref{univab}.
	\end{teo}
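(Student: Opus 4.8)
The plan is to prove the isomorphism first on $\A_g$ and then pull it back along the Torelli map $j$. Since $\LBD$ is relatively ample with $h^0(A_\tau,L_\tau^{\otimes 2})=2^g$ and $R^i\varpi_*\LBD=0$ for $i>0$ (Kodaira vanishing, already used in Theorem \ref{thmabel}), the bundle $\det\varpi_*\LBD$ is an honest determinant of cohomology and by \eqref{eq:1} equals $\la(\LBD)^*$. First I would determine its first Chern class. The quickest route is to read it off from the curvatures already computed in the paper: combining $\erre(\det\mathcal E,\det h_{L^2})=-\frac{\sqrt{-1}}{2}\om_S$ from Proposition \ref{teorelct} with $\sqrt{-1}\cd\erre(\la(\LBD),h_Q)=2^{g-2}\om_S$ from Theorem \ref{thmabel} and the identity $\la(\LBD)=(\det\varpi_*\LBD)^*$ yields
\begin{gather*}
c_1(\det\varpi_*\LBD)\,=\,-2^{g-1}\,c_1(\det\mathcal E).
\end{gather*}
Conceptually this is Grothendieck--Riemann--Roch for the smooth family $\varpi$: writing $\operatorname{ch}(\LBD)=e^{c_1(\LBD)}$ and using $T_{\univabt/\siegg}=\varpi^*\mathcal E^*$ for an abelian scheme, the natural fiberwise K\"ahler form $\om$ representing $c_1(\LB)$ has no base directions, so $c_1(\LB)^{g+1}=0$ and only the term $\varpi_*\bigl(c_1(\LBD)^g/g!\bigr)\cd\frac{1}{2}c_1(\mathcal E^*)=2^g\cd(-\frac{1}{2}c_1(\det\mathcal E))$ survives.

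Next I would pull back by $j$. The Hodge bundle of the Jacobian satisfies $j^*\mathcal E\cong\pi_*K_{\Cg/\Mg}$, because $H^0(JC,\Omega^1_{JC})=H^0(C,K_C)$; hence $j^*\det\mathcal E\cong\det\pi_*K_{\Cg/\Mg}$. Combined with $\mathcal L=(\det\pi_*K_{\Cg/\Mg})^*$ from \eqref{dhlb}, this gives on $\M_g$
\begin{gather*}
c_1\bigl(j^*\det\varpi_*\LBD\bigr)\,=\,-2^{g-1}\,c_1(\det\pi_*K_{\Cg/\Mg})\,=\,2^{g-1}\,c_1(\mathcal L).
\end{gather*}
To upgrade this equality of first Chern classes to the asserted isomorphism of holomorphic line bundles, I would use that for $g\ge 3$ the orbifold Picard group $\operatorname{Pic}(\M_g)$ is isomorphic to $\Z$, torsion-free, and generated by the Hodge class (Harer; Arbarello--Cornalba). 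Both $j^*\det\varpi_*\LBD$ and $\mathcal L^{\otimes 2^{g-1}}$ are then integer powers of this generator, with exponents determined by the equal first Chern classes just computed, so the two bundles are isomorphic, which is the claim.

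The main obstacle is exactly this final upgrade from a numerical identity to an isomorphism. The Chern-class computation is routine and, as noted, essentially already in the paper, but $c_1$ alone cannot distinguish bundles differing by torsion in $\operatorname{Pic}$. A self-contained alternative is to argue $\Sp(2g,\Z)$-equivariantly over $\siegg$: both bundles are trivialized there by explicit nowhere-vanishing holomorphic sections, namely $v(\tau)=\theta_{\tau,1}\wedge\cds\wedge\theta_{\tau,2^g}$ from \eqref{defv} and $s^{\otimes(-2^{g-1})}$ with $s=dz_1\wedge\cds\wedge dz_g$, and one compares their factors of automorphy using Igusa's transformation law for the second-order theta functions. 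The delicate point is that this law carries a finite-order automorphy character depending on $\ga\in\Sp(2g,\Z)$, and one must check that the characters attached to $v$ and to $s^{\otimes(-2^{g-1})}$ agree; controlling this torsion ambiguity is the heart of Kouvidakis's argument, and it is precisely what the torsion-freeness of $\operatorname{Pic}(\M_g)$ sidesteps after descent to $\M_g$.
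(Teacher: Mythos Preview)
The paper does not prove this statement: immediately after the theorem it simply cites \cite[p.~2567, Corollary 4.1]{kouvidakis} and remarks that it also follows from \cite[Ch.~1, Theorem 5.1]{faltings-chai}. So there is no in-paper proof to compare against; what you have written is an independent argument.

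Your main line --- extract the equality of first Chern classes from the curvature formulas already established in Proposition~\ref{teorelct} and Theorem~\ref{thmabel}, pull back by $j$ using $j^*\mathcal E\cong\pi_*K_{\Cg/\Mg}$, and then promote the Chern-class equality to an isomorphism via the torsion-freeness of the orbifold Picard group of $\Mg$ --- is correct for $g\geq 3$ and is a genuinely different route from Kouvidakis's. His argument works on $\A_g$ with level structure and identifies the determinant bundle directly via the transformation behaviour of second-order theta functions, so it yields the statement on $\A_g$ itself and for all $g$; your approach instead leans on the paper's curvature computations together with one external structural input (Harer/Arbarello--Cornalba on $\operatorname{Pic}(\Mg)$), and only decides the question after restriction to $\Mg$. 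You acknowledge this trade-off clearly in your final paragraph.

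One correction to the ``conceptual'' GRR aside: the claim that $c_1(\LB)^{g+1}=0$ because the fiberwise form $\om$ in \eqref{defom} has no base directions is not valid over a nontrivial base. That form, viewed globally on $\univabt$, is not closed (its coefficients $({\rm Im}\,\tau)^{ij}$ vary with $\tau$), so it does not represent $c_1(\LB)$ in de Rham cohomology of the total space; over $\A_g$ the pushforward $\varpi_*\bigl(c_1(\LB)^{g+1}\bigr)$ is in fact a nonzero multiple of $c_1(\det\mathcal E)$, and this is precisely where the Hodge class enters the GRR computation. Since your actual argument uses the curvature formulas rather than GRR, this slip does not affect your conclusion, but the GRR sketch as written is misleading.
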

	
	See \cite[p.~2567, Corollary 4.1]{kouvidakis}. The
	theorem also follows from \cite[Ch.~1, Theorem 5.1]{faltings-chai}.

	Theorem \ref{TK} together with \eqref{eq:1} and \eqref{omegasiegel} implies that
	\begin{gather*}
		2^{g-2} \, j^*\omega_S\,=\, \sqrt{-1} \cd \erre(j^*\la( \LB^{\otimes 2}), \,j^*h_Q)
		\,=\,\sqrt{-1} \cd \erre(j^*(\det \varpi_*\LBD)^*, \,j^*h_Q)\\
		=\, -\sqrt{-1} \cd \erre(j^*(\det \varpi_*\LBD),
		\,(j^*h_Q)^*)\,=\,-\sqrt{-1} \cd \erre( \, \mathcal L ^{ \otimes
			2^{g-1}}, (j^*h_Q)^*).
	\end{gather*}
	In other words, $(j^*h_Q)^*$ is a metric on
	$\mathcal L ^{ \otimes 2^{g-1}}$ such that
	\begin{gather*}
		\erre(\mathcal L ^{ \otimes 2^{g-1}},\, (j^*h_Q)^*) \,=\,2^{g-2} \,
		\sqrt{-1}\, j^*\omega_S.
	\end{gather*}
	If we take $h'$ to be the $2^{g-1}$-th root of $(j^*h_Q)^*$, we get that
	\begin{gather*}
		\erre(\mathcal L, \,h') \,=\,\frac{\sqrt{-1}}{2} \, j^*\omega_S.
	\end{gather*}
	
	Hence, we have proved the following:

	\begin{teo}\label{teo1}
		Let $\mathcal L$ be the Hodge line bundle as in \eqref{dhlb}. Denote
		by $h'$ the Hermitian metric on $\mathcal L$ given by the
		$2^{g-1}$-th root of the hermitian metric $(j^*h_Q)^*$ on
		$\mathcal L ^{ \otimes 2^{g-1}}$. Then
		\begin{gather*}
			\erre (\mathcal L, \,h')\,\, =\,\, \frac{\sqrt{-1}}{2} j^*\omega_S.
		\end{gather*}
		It follows that $D(\mathcal L, \,h')\, = \,D ( \mathcal L,\, h)$. In particular if
$g\,\geq\, 5$, then under
		the correspondence
		\begin{equation*}
			\begin{tikzcd}
				\Gamma (\Conn^t (\mathcal L)) \arrow[rr, "\mathbb{F}"]
				\arrow[dr,swap,"\Curv"]
				& & \Gamma (\mathcal P_g)  \arrow[dl,"\TT"] \\
				& \class &
			\end{tikzcd}
		\end{equation*}
		in Theorem \ref{thm:correspondence}, the Chern connection $D(\mathcal L,\, h')$ is mapped to the
		projective structure $\beta^\eta$.
	\end{teo}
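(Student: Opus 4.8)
The plan is to show that the metric $h'$, obtained as the $2^{g-1}$-th root of $(j^*h_Q)^*$, induces exactly the same Chern connection on $\mathcal L$ as the $L^2$-metric $h$ from Theorem \ref{teoL2}, and then invoke Theorem \ref{thm:correspondence} to identify the associated projective structure. The key observation is that both metrics have the \emph{same curvature} $\frac{\sqrt{-1}}{2} j^*\om_S$: for $h'$ this was just computed by taking roots of \eqref{omegasiegel} via Theorem \ref{TK}, while for $h$ it is precisely \eqref{curv} in Theorem \ref{teoL2}. Since two Hermitian metrics on a line bundle differ by a positive smooth function, and their Chern connections differ by $\de\debar$ of the logarithm of that function, having identical curvature means $\de\debar\log(h'/h)=0$.

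The main point I would make is that equal curvature forces equal Chern \emph{connections}, not merely equal curvature forms. On a line bundle the Chern connection of a metric $e^{-\varphi}h_0$ differs from that of $h_0$ by the $(1,0)$-form $\de\varphi$ (up to sign conventions), and its $(0,1)$-part is fixed to be $\debar_{\mathcal L}$ since both are complex connections. Thus the two connections differ by a global holomorphic $1$-form on $\M_g$: writing $h'=e^{-\varphi}h$, the difference of connections is $\de\varphi$, and vanishing of the curvature difference $\de\debar\varphi=0$ together with reality of $\varphi$ shows $\de\varphi$ is a holomorphic $1$-form. First I would record this reduction, so that the conclusion $D(\mathcal L,h')=D(\mathcal L,h)$ follows from the vanishing of holomorphic $1$-forms.

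Here the hypothesis $g\geq 5$ is exactly what is needed: by Theorem \ref{teo-fpt} the moduli space $\M_g$ admits no nonzero holomorphic $1$-form in the orbifold sense, so $\de\varphi=0$ and the two Chern connections coincide. With $D(\mathcal L,h')=D(\mathcal L,h)$ established, the final assertion is immediate: Theorem \ref{teoL2} already proved via \eqref{maineq} that $\mathbb F(D(\mathcal L,h))=\beta^\eta$, so $\mathbb F(D(\mathcal L,h'))=\beta^\eta$ as well, placing $D(\mathcal L,h')$ in the correct position of the commutative triangle \eqref{eq:4}.

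I expect the curvature computation itself to be entirely routine, since it has been carried out in the paragraph preceding the statement by combining Theorem \ref{TK}, \eqref{eq:1}, and \eqref{omegasiegel} and then extracting the $2^{g-1}$-th root; the only genuine content is the passage from equal curvature to equal connection. The mild subtlety to flag is that taking a root of a metric is a purely local operation on the positive smooth function representing it, so it is well-defined and smooth, and the resulting curvature scales by the reciprocal of the root exponent, as used in the displayed chain before the statement. The potential obstacle is ensuring that the identification of connections is genuinely global on the orbifold $\M_g$ and that Theorem \ref{teo-fpt} is applicable in the orbifold sense; this is already built into the framework of Theorem \ref{thm:correspondence}, so no new difficulty arises.
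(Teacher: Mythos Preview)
Your proposal is correct and follows essentially the same route as the paper: the curvature identity $\erre(\mathcal L,h')=\frac{\sqrt{-1}}{2}j^*\om_S$ is established in the discussion preceding the theorem (via Theorem \ref{TK}, \eqref{eq:1}, \eqref{omegasiegel}, and extraction of the $2^{g-1}$-th root), and the passage from equal curvature to equal connection is exactly the injectivity of $\Curv$ in Theorem \ref{thm:correspondence}(1), whose proof is precisely the argument you spell out (write the difference as a $(1,0)$-form, observe it is $\debar$-closed, invoke Theorem \ref{teo-fpt}). The only minor difference is packaging: the paper appeals to Theorem \ref{thm:correspondence} as a black box (see the paragraph following the theorem), whereas you unpack that injectivity argument explicitly via $h'=e^{-\varphi}h$; your version makes more transparent that the hypothesis $g\geq 5$ is already used for the equality $D(\mathcal L,h')=D(\mathcal L,h)$, which the phrasing of the theorem statement leaves slightly ambiguous.
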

	
	\begin{say}
		In other words, in Theorems \ref{teo1}, and \ref{teoL2} we
		constructed Hermitian metrics $h$ and $h'$ on the Hodge line bundle $\mathcal{L}$ such that
		\begin{gather}\label{curvature}
			R(\mathcal L,\, h)\,\,=\,\,R(\mathcal L, \,h')\,\, =\,\, \frac{\sqrt{-1}}{2} j^*\om_S.
		\end{gather}
		To summarize, we recall that $h$ is defined as the dual of the determinant of the $L^2$-metric on the
Hermitian vector bundle $\varpi_* \Omega^1_{\mathcal A / \A_g}$, while $h'$ is defined as the $2^{g-1}$-th
root of (the pull-back via the Torelli map of) the Quillen metric coming from the Theta line bundle
		endowed with the Hermitian  metric given by the polarization. It follows from \eqref{curvature}
		and Theorem \ref{thm:correspondence} that the Chern connections associated with $(\mathcal L, \,h)$
		and $(\mathcal L,\, h')$ coincide.
		
	\end{say}
	
	\section{Curves and the Arakelov metric}\label{sec:curv-arak-metr}
	
	\begin{say}
		We recall the definition of the Arakelov metric.
		
		Let $\Abel_{x_0}\,:\, C \,\lra\, JC$ be the Abel-Jacobi map with
		base point $x_0\,\in\, C$. Fix a symplectic basis of
		$H_1(C,\,\Z)$, and let $\tau$ be the associated period matrix. The
		polarization $\om_\tau$ in \eqref{defom} is a translation-invariant
		K\"ahler metric on $JC\,=\,A_\tau$. Since $\Abel_{x_0}$ is an
		embedding, the pull-back $\Abel_{x_0}^*\om_\tau$ is a K\"ahler
		metric on $C$. Moreover, it is actually independent of the choice of
		$x_0$ because $\om_\tau$ is translation-invariant, and a different
		choice of the base point results in just translating the map
		$\Abel_{x_0}$. The rescaled pull-back form
		\begin{gather*}
			\mu\,\,:= \,\,\frac{1}{g} \Abel_{x_0}^*\om_\tau
		\end{gather*}
		is called the \emph {canonical metric}.
		
		Note that by \cite[p.~335]{griffiths-harris},
		\cite[p. 97]{narasimhan-Riemann},
		\begin{gather*}
			\int_C \Abelz^* c_1(L_\tau) \,=\,g
		\end{gather*}
		(see \eqref{ltau}). Hence we have $\int_C \mu \,=\,1$. In other words,
		$[\mu]$ is the positive generator of $H^2(C,\,\Z)$.
		
		Now let $h$ be a K\"ahler metric on $C$. Since $\dim _\C C =1$, any
		Hermitian metric on the tangent bundle of $C$ is K\"ahler. So $h$ is
		simply a Hermitian metric on the line bundle $TC$. Its curvature
		satisfies the condition
		\begin{gather*}
			\biggl [ \frac{\sqrt{-1}}{2\pi}R(h) \biggr] \,=\, c_1 (TC) \,=\,
			(2-2\cdot{\rm genus}(C))[\mu]\,=\,(2-2g)[\mu].
		\end{gather*}
		Consequently, there is a function $\chi \,\in\, \cinf(C,\,\R)$ such that
		\begin{gather*}
			\frac{ \sqrt{-1} R(h ) - \sqrt{-1}\de\debar \chi}{2\pi
			}\,\,=\,\,(2-2g)\mu.
		\end{gather*}
		Now recall that if we re-scale the metric $h$ by a factor $e^\psi$
		with $\psi \,\in\, \cinf(C,\,\R)$, then
		\begin{gather}
			\label{riscalo}
			\erre(e^{\psi}h) \,=\, \erre(h) - \de\debar \psi.
		\end{gather}
		Hence the metric $\overline{h}\,:=\,e^\chi h$ satisfies the equation
		\begin{gather}
			\label{firstAra}
			\frac{\sqrt{-1}}{2\pi} \erre(\overline{h}) \,\,=\,\, (2 -2g ) \mu,
		\end{gather}
		in particular, its curvature is a multiple of the canonical
		metric. By \eqref{riscalo}, the condition in \eqref{firstAra}
		identifies $\overline{h}$ up to constant re-scalings, meaning,
		$\overline{h}$ is defined up to multiplication by a positive
		constant. The Arakelov metric $h_{\rm Ar}$ is defined by
		\eqref{firstAra} together with a second condition that normalizes
		this positive constant by comparing $\om_{\rm Ar}$ with the Green
		function of the canonical metric. See e.g. \cite[p. 432]{wentworth}
		for more details.
		
	\end{say}
	
	\begin{say}
		
		Now we consider the relative canonical bundle $\mathcal{K} \lra \Cg$
		equipped with the dual Arakelov metric (denoted $h_{\rm Ar}$) and
		also the family $\Cg\lra \Mg$ with the relative K\"ahler form
		obtained by assigning the Arakelov metric $\om_{\rm Ar}$ on the
		fibers.  Then Faltings' delta invariant is defined as the analytic
		torsion:
		\begin{gather*}
			\delta\,=\,T(\Cg,\, \om_{\rm Ar} ,\, \mathcal{K},\, h_{\rm Ar}).
		\end{gather*}
Now the determinant bundle is $\mathcal L$ and
		\begin{gather*}
			h_Q(\Cg,\, \om_{\rm Ar} ,\, \mathcal{K},\, h_{\rm Ar}) \,= \,\exp(
			{T (\Cg,\, \om_{\rm Ar} ,\, \mathcal{K},\, h_{\rm Ar})}) h_{L^2} =
			e^\delta \, h,
		\end{gather*}
		where $h$ is the $L^2$ metric on $\mathcal L $ as in Theorem
		\ref{teoL2}. Hence the metric $h$ and its Chern connection, can also
		be gotten from a Quillen metric on the universal curve, modified by
		Faltings $\delta$ invariant. In other words, we have
		\begin{gather*}
			\frac{\sqrt{-1}}{2} j^* \om_S\,=\, \erre (\mathcal L,\, h)\, =\,
			\erre (e^{-\delta } h_Q(\Cg,\, \om_{\rm Ar} ,\, \mathcal{K},\,
			h_{\rm
				Ar}) \\
			=\, \de \debar \delta + \erre h_Q(\Cg,\, \om_{\rm Ar} ,\,
			\mathcal{K},\, h_{\rm Ar}).
		\end{gather*}
	\end{say}

\section{Further perspectives}\label{sec:further-perspectives}

Consider the section $\widehat{\eta}$ in \eqref{we}. Restricting it $\Delta_3\,=\, 3\Delta\, \subset\,
C\times C$ we had the projective structure $\beta^\eta(C)$ on $C$ (see \eqref{be}). A projective structure
on $C$ is same as a $\text{PSL}(2, {\mathbb C})$--oper on $C$; see \cite{BD1}, \cite{BD2} for opers.
For $n\, \geq\, 2$, restricting $\widehat{\eta}$ to $$\Delta_{n+1}\,=\, (n+1)\Delta\, \subset\,
C\times C$$ we get a canonical $\text{PSL}(n, {\mathbb C})$--oper on $C$; to clarify, this 
$\text{PSL}(n, {\mathbb C})$--oper on $C$ does not depend on anything. So over $\Mg$ we get a
$C^\infty$ section of the family of $\text{PSL}(n, {\mathbb C})$--opers. We hope to be able to study
in future this section of the family of $\text{PSL}(n, {\mathbb C})$--opers over $\Mg$.

In view of the correspondence in the fundamental diagram
\eqref{intro-diag} it is reasonable to ask several other questions. A question would
be to try to understand the positions of other families of
projective structures, metrics or connections in the diagram. For example, a new
canonical projective structure has been constructed in \cite{BGV} and
it has been shown there that it is different from the $\beta^u$ and
$\beta^\eta$. It would be interesting to frame this new canonical
projective structure in the diagram, answering such questions as: What
is its $\debar$? What is the corresponding connection?  More generally
one might use the correspondence not only to associate to canonical
projective structures the corresponding $\debar$, but also conversely
 to understand $(1,\,1)$-forms, in particular K\"ahler metrics on $\Mg$, via the
associated canonical projective structures and connections.

\section*{Acknowledgements}

We thank the referee for very helpful comments to improve the exposition. The authors would like to thank 
Jos\'e Ignacio Burgos, Robin de Jong, Paola Frediani, Gian Pietro Pirola and Ken-ichi Yoshikawa for useful 
conversations/emails on the topics of this paper.

This project was initiated during the program Topics in 
Hodge Theory (code:ICTS/hodge\_theory2023/02) at International Centre for Theoretical Sciences (ICTS), 
Bangalore. The authors wish to thank ICTS for partial support and for providing a great atmosphere for 
research.
The first author is partially supported by a J. C. Bose Fellowship (JBR/2023/000003).
The second author was partially supported by INdAM-GNSAGA, by MIUR PRIN 2022: 20228JRCYB, ``Moduli 
spaces and special varieties'' and by FAR 2016 (Pavia) ``Variet\`a algebriche, calcolo algebrico, grafi 
orientati e topologici''.  The third author was partially supported by the Dutch Research Council NWO 
project Cohomology of Moduli Space of Curves (BM.000230.1) and by INdAM-GNSAGA project CUP E55F22000270001.

\section*{Mandatory declarations}

No data were generated or used.

\end{document}